\algnewcommand{\algorithmicforeach}{\textbf{for each}}
\newtheorem{example}{Example}
\definecolor{mylinkcolor}{RGB}{0,0,130}
\crefname{section}{Section}{Sections}
\crefname{subsection}{Subsection}{Subsections}
\title{The Distributional Koopman Operator for Random Dynamical Systems}
\author{Maria Oprea\thanks{Center for Applied Mathematics, Cornell University, Ithaca, NY, USA (\email{mao237@cornell.edu}).} \and Alex Townsend\thanks{Department of Mathematics, Cornell University, Ithaca, NY, USA 
  (\email{townsend@cornell.edu}).} \and Yunan Yang\thanks{Department of Mathematics, Cornell University, Ithaca, NY, USA 
  (\email{yunan.yang@cornell.edu}).}} 
\begin{document}

\maketitle

\begin{abstract}
The Distributional Koopman Operator (DKO) is introduced as a way to perform Koopman analysis on random dynamical systems where only aggregate distribution data is available, thereby eliminating the need for particle tracking or detailed trajectory data. Our DKO generalizes the stochastic Koopman operator (SKO) to allow for observables of probability distributions, using the transfer operator to propagate these probability distributions forward in time. Like the SKO, the DKO is linear with semigroup properties, and we show that the dynamical mode decomposition (DMD) approximation can converge to the DKO in the large data limit. The DKO is particularly useful for random dynamical systems where trajectory information is unavailable. 
\end{abstract}

\begin{keywords}
Koopman operator, random dynamical systems, distributional data, transfer operator, dynamic mode decomposition
\end{keywords}

\begin{AMS}
37M25, 60H10
\end{AMS}

\tableofcontents

\section{Introduction}
\label{sec:introduction}

The Koopman operator framework has emerged as a powerful tool for analyzing the dynamics of nonlinear systems by transforming nonlinear evolution into a linear but infinite-dimensional setting in the space of observable functions. Introduced by Koopman in 1931~\cite{koopman1931hamiltonian}, the classical Koopman operator is extensively studied for deterministic dynamical systems, capturing the evolution of scalar observables along trajectories of a system's state~\cite{original_stoch_koopman, budivsic2012applied}. In many practical applications, however, systems are subject to uncertainty or noise due to modeling errors, environmental variability, or inherent randomness, necessitating a stochastic description. Random dynamical systems incorporate such randomness explicitly, modeling state evolution as a stochastic process rather than a deterministic map or flow.

While Koopman theory was extensively developed for deterministic systems, it has recently been extended to random dynamical systems (RDSs) with the Stochastic Koopman Operator (SKO)~\cite{stochastic_koopman} by evolving observables via conditional expectations. In fact, there are several papers in the literature extending the traditional Koopman framework. For example, Zhang et al.~\cite{zhang2022koopman} develops a Koopman-based framework for rare event simulation in random differential equations, focusing on finding rare events that dominate the dynamics. In a similar spirit, Sinha et al.~\cite{sinha2020robust} propose a robust computational techniques for approximating the Koopman operator in the context of random dynamical systems. Furthermore, the spectral properties of the Koopman operator for random systems have been investigated~\cite{vcrnjaric2020koopman}, and additional applications of Koopman operator methods in estimation and control are presented in Otto and Rowley~\cite{otto2021koopman}.

In this work, we continue this line of research by introducing the Distributional Koopman Operator (DKO), which acts on observables of probability distributions. The key idea is to view RDSs as a deterministic evolution in the space of probability distributions via the transfer operator~\cite{lasota1994entropy, klus2016numerical}. DKO uses observables on probability distributions instead of functions of the state space and evolves them using the deterministic flow governed by the transfer operator. The DKO is an operator $\mathcal{D}_t$ such that
\[
\left[\mathcal{D}_t h\right]\!(\pi) = h\big(T_t(\pi)\big),
\]
where $T_t$ is the transfer operator that evolves the probability distribution $\pi$ forward by time $t$ in a way that corresponds to the underlying RDS, and $h$ is an observable. The DKO allows one to apply Koopman analysis to RDSs where data is not collected as a single trajectory or when individual particle tracking is unavailable. Like other Koopman operators, the DKO is linear and forms a semigroup (see~\cref{lem:DKO_Linear}). 

We also present a data-driven dynamical mode decomposition (DMD) algorithm tailored to the DKO framework (see~\cref{algo:DKO}) to construct a finite-dimensional matrix approximation using distributional snapshot data. Unlike the DMD algorithm for the SKO, the DMD algorithm for the DKO does not need a single trajectory. This allows us to apply the DKO framework to dynamical systems where particle tracking is not feasible (see, for example,~\cref{subsec:Dust}). Moreover, our DMD approximation of DKO converges to the best matrix approximation in the large data limit  (see~\cref{sec:DKOHilbert,sec:Convergence}). 

Our DKO framework generalizes SKO in the sense that when the input distributions are Dirac measures, and the DKO observables are linear, i.e., in $H_1$ (see~\cref{eq:linear}), the DKO reduces to SKO (see~\cref{sec:dko_sko_connection}). When single trajectory information is collected from an RDS and linear observables are chosen, the proposed framework constructs the same operator as SKO. However, the DKO framework continues to make sense without single trajectory information and can take nonlinear observables into consideration, such as variance (see~\cref{sec:DKOobservables,subsec:variance}). Our DKO can be applied to any Markovian dynamics, where the future evolution of an RDS is fully determined by its current state alone, independent of its historical states. While the quality of the finite-dimensional approximation of the DKO depends on the choice of observables and the type of snapshot data collected, the matrix approximation can always be constructed. 

The paper is structured as follows. In~\cref{sec:background}, we review background material on RDSs,  the SKO, and the DMD algorithm for SKO approximation.~\Cref{sec:DKO} introduces the DKO, establishes its theoretical properties, and discusses its relationship with SKO when restricted to linear observables. In~\cref{sec:DKOHilbert}, we develop a finite-dimensional regression framework to approximate the DKO, and we then prove convergence in the infinite-data limit in~\cref{sec:Convergence} utilizing functional analysis over the space of probability distributions. Numerical experiments are presented in~\cref{sec:numerics} to demonstrate the efficacy of the proposed approach using datasets without individual particle tracking information. 

\section{Background}\label{sec:background} 
This section presents some background material on RDSs (see \Cref{sec:random_dynamical_system}), the transfer operator (see~\Cref{sec:TransferOperator}),  SKO (see \Cref{subsec:sko}), and DMD approximation for the SKO (see \Cref{subsec:DMD4SKO}).

\subsection{Random Dynamical Systems}\label{sec:random_dynamical_system}
An RDS is a dynamical system where the evolution of trajectories is random in nature. Let $(\Omega, \mathcal{F}, p)$ be a probability space and consider a semigroup of measurable maps $\theta_t\colon \Omega \to \Omega$ that preserves the measure $p$. For any $\omega\in \Omega$, the shifted element $\theta_t\omega$ is also random, and we assume that $\omega$ and $\theta_t\omega$ are independent.  An RDS is a map 
\[
\Phi\colon \mathbb{R}\times\Omega\times M \to M,
\]
which is measurable with respect to $\mathcal{B}\times\mathcal{F}$ (with $\mathcal{B}$ the Borel $\sigma$-algebra on $\mathbb{R}$). We think of the first argument of $\Phi$ as the time parameter, the second as encapsulating the randomness, and the third as the state. When the RDS is discrete-time,  not continuous, we restrict the first argument of $\Phi$ to an integer, i.e.,  $\Phi\colon \mathbb{N}\times\Omega\times M \to M$. 

We write $\Phi_t$ to denote the evolution map at the fixed time $t$ and $\Phi_n$ for a discrete-time RDS at timestep $n$. In other words, we have
\[
\Phi_t(\omega,x) = \Phi(t,\omega,x), \quad \text{ or } \quad \Phi_n(\omega,x) = \Phi(n,\omega,x),
\]
depending on whether the RDS is continuous-time or discrete-time. 
The map $\Phi$ satisfies the following cocycle properties:
\[
\Phi_0(\omega,\cdot) = \operatorname{Id}, \quad \Phi_{t+s}(\omega,\cdot) = \Phi_t\bigl(\theta_s\omega,\cdot\bigr)\circ \Phi_s(\omega,\cdot),
\]
where $\operatorname{Id}$ is the identity operator.  An RDS admits two equivalent interpretations:
\begin{itemize}[noitemsep,leftmargin=*]
    \item As a deterministic system on the product space $\Omega\times M$ via the formulation: 
    \[
    \Psi_t(\omega,x) = \bigl(\theta_t\omega,\Phi_t(\omega,x)\bigr).
    \]
    \item As a random family of dynamical systems on $M$, where each $\omega\in\Omega$ determines an individual evolution via $\Phi_t(\omega,\cdot)$.
\end{itemize}
\smallskip

We give two examples of RDSs below.
\begin{example}[Random Rotations on the Circle]
Let $M=\mathbb{S}^1$ and consider a discrete-time RDS for which, at each time step, a point is rotated clockwise by angle $\nu$ and then rotated by an angle $\omega$ drawn uniformly at random from $[-\tfrac{1}{2},\tfrac{1}{2}]$. One can interpret this as dynamics on the unit circle where the state is an angle defined modulo $2\pi$. Define $\Omega$ as the set of infinite sequences 
\[
\left\{(\omega_k)_{k\geq1}\mid \omega_k\in[-\tfrac{1}{2},\tfrac{1}{2}]\right\},
\]
equipped with the Bernoulli measure $p$, where the components $(\omega_k)$ are identical and independently distributed (i.i.d.), following the  uniform distribution over $[-\tfrac{1}{2},\tfrac{1}{2}]$. Let $\omega = (\omega_0,\omega_1,\dots)$ be an element of $\Omega$ with the shift $\theta_n$ for $n\in\mathbb{N}$ defined by 
\[
\theta_n(\omega_0,\omega_1,\dots) = (\omega_n,\omega_{n+1},\dots).
\]
It is simple to check that $\theta_n$ preserves the measure $p$ for any $n$~\cite[Prop.~4.4.4]{dynamical_systems_book}.  The corresponding RDS map is given by a recurrence
\[
\Phi_{n+1}(\omega,x) = \left(\Phi_n\bigl(\theta_1 \omega,x\bigr) + \nu +  \omega_n\right)\!\!\!\!\mod\! 2\pi.
\]
Thus, at each step, the state undergoes a rotation by a random angle drawn uniformly from $[-\tfrac{1}{2},\tfrac{1}{2}]$ in addition to the drift $\nu$.
\end{example}

\begin{example}[Stochastic Differential Equations]
The solution to a stochastic differential equation (SDE) naturally defines an RDS~\cite{rds}. Let 
\[
\Omega = \Bigl\{\omega\in \mathcal{C}(\mathbb{R}) : \omega(0)=0\Bigr\}
\]
be the space of continuous real-valued functions starting at the origin, endowed with its Borel $\sigma$-algebra, and let $p$ be the Wiener measure, which is invariant under the shifts~\cite[Thm.~8.4]{sde_book}:
\[
\theta_t\omega(s) = \omega(t+s)-\omega(s).
\]
For the SDE given by 
\[
dX_t = a(X_t)\,dt + b(X_t)\,d\omega_t,
\]
with Lipschitz continuous functions $a$ and $b$ and an initial condition $X_0 = x$, the solution can be expressed in the integral form given by
\[
\Phi_t(\omega,x) = x + \int_0^t b\bigl(\Phi_s(\omega,x)\bigr)\,d\omega_s + \int_0^t a\bigl(\Phi_s(\omega,x)\bigr)\,ds.
\]
\end{example}

\subsection{Transfer Operator}\label{sec:TransferOperator}
For a fixed initial condition $x\in M$ sampled from the probability distribution $\pi_0$, the state at time $t$ is a random variable following the distribution $\pi_t\in \mathcal{P}(M)$, where $\mathcal{P}(M)$ denotes the set of all probability distributions on $M$. A fundamental tool for analyzing an RDS is the \emph{transfer operator}, denoted by $T_t$, which characterizes the evolution of an initial probability distribution $\pi_0\in\mathcal{P}(M)$ under a given dynamical system. Specifically, if $X_0\sim \pi_0$, then the distribution of the evolved state is given by $X_t\sim T_t(\pi_0)$. Before defining $T_t$,  we first explain the notion of the pushforward measure~\cite{topics_in_ot}.

\begin{definition}[Pushforward Measure]
Let $(X,\mathcal{B}_X)$ and $(Y, \mathcal{B}_Y)$ be measurable spaces with $\mathcal{B}_X$ and $\mathcal{B}_Y$ being $\sigma$-algebras on $X$ and $Y$, respectively.  Let $f: X \to Y$ be a measurable function and $\mu$ a measure on $X$. The pushforward measure $f {\#}\mu$ of $\mu$ under $f$ is a measure on $Y$ defined by
\[
f{\#} \mu(B) = \mu(f^{-1}(B))\,,
\]
for all $ B \in \mathcal{B}_Y$.
\end{definition}

\begin{definition}[Transfer Operator for an RDS]\label{def:transfer}
Let $\Phi\colon\mathbb{R}\times \Omega\times M\to M$ be an RDS. The corresponding \emph{transfer operator} $T_t\colon\mathcal{P}(M)\to\mathcal{P}(M)$ is defined via
\[
\int_M \hat{h}(x)\,d\bigl(T_t\pi\bigr)(x) = \int_\Omega \int_M \hat{h}(x)\,d\Bigl(\Phi_t(\omega,\cdot)\#\pi\Bigr)(x)\,dp(\omega),
\]
for every test function $\hat{h}\in C_c^\infty(M)$, where $C_c^\infty(M)$ denotes the space of infinitely differentiable functions on $M$ with compact support.
\end{definition}
The transfer operator $T_t$ satisfies similar cocycle properties as the map $\Phi$,  i.e.,  $T_{t+s} = T_t\circ T_s$ for $t,s\geq 0$ (see~\cref{lemma:Pt_semi_group}). 

By casting an RDS as a deterministic evolution on $\mathcal{P}(M)$ through the transfer operator $T_t$, we have a framework encompassing both deterministic and random dynamics. This perspective is central to our DKO framework.

\subsection{The Stochastic Koopman Operator}\label{subsec:sko}
The SKO provides a way to analyze the evolution of observables under random dynamics~\cite{stochastic_koopman,original_stoch_koopman}. The SKO observables can be any bounded functions defined over the state space $M$.   The SKO is a deterministic operator because it characterizes the evolution of functions by averaging over the randomness of the dynamical system. 

\begin{definition}[SKO]\label{def:SKO}
Let $t\geq 0$ and $\Phi\colon \mathbb{R}\times \Omega \times M \to M$ be an RDS, where $(M,\mathcal{B}_M)$ is a Polish space. The SKO,  denoted by $\mathcal{S}_t$, is an operator from observables in $\mathcal{L}^\infty(M)$ to $\mathcal{L}^\infty(M)$ such that 
$$
\left[\mathcal{S}_t \hat{h}\right](x) \coloneqq \mathbb{E}_{\omega\sim p}\Bigl[\hat{h}\bigl(\Phi_t(\omega,x)\bigr)\Bigr] = \int_\Omega \hat{h}\bigl(\Phi_t(\omega,x)\bigr)\,dp(\omega),\qquad x\in M,
$$
for every $\hat{h} \in \mathcal{L}^\infty(M)$. Here, $\mathcal{L}^\infty(M)$ is the space of essentially bounded measurable functions on $M$ and $p$ is the probability distribution governing $\Omega$.
\end{definition}

The operator $\mathcal{S}_t$ maps an observable of the state space at time $0$ to the expected value of the observed random outcome of the dynamics at time $t$. Thus,  one can estimate the value of $[\mathcal{S}_t \hat{h}](x)$ by initializing the random dynamics at $x$ hundreds of times,  waiting for time $t$,  and then averaging the values of $\hat{h}$. 

The SKO is closely related to both the backward Kolmogorov equation and the transfer operator associated with the underlying random dynamics. Notably, the SKO operator is defined pointwise for $x\in M$, and the evaluation requires simulating hundreds of random trajectories starting from each $x$. These assumptions are natural in many theoretical settings but can be restrictive in applications where repeated experiments and measurements are impracticable. Our DKO framework avoids making these assumptions. 

For the SKO to be useful, we require the family of operators $\left\{\mathcal{S}_t\right\}_{t\geq 0}$ to be consistent in a certain sense that they satisfy a semigroup property. For the SKO corresponding to an RDS system, the semigroup property is guaranteed by the fact that the transfer operators satisfy a semigroup property (see~\cite[Sec.~2.1]{stochastic_koopman}).

\subsection{Finite-Dimensional Approximation for SKO}\label{subsec:DMD4SKO}
A popular algorithm for computing an approximation to the SKO is DMD~\cite{rowley2009spectral,original_stoch_koopman,williams2015data}. Given bounded measurable observables $\hat{h}_1, \dots, \hat{h}_n:M \to \mathbb{R}$ and a single trajectory from the RDS, i.e., $x_0,x_1,\ldots,x_m$, which are snapshots sampled at fixed time intervals of length $\Delta t$, DMD approximates the action of the SKO on the span of the observables via least squares. The step-by-step procedure is shown in~\cref{algo:SKO}, which returns an $n\times n$ matrix approximation $S_m$ of $\mathcal{S}_{\Delta t}$.  \cref{algo:SKO} converges to a Galerkin approximation of the SKO under the following assumptions: (1) the single trajectory $\{x_j\}_{j = 0}^m$ sample the entire state space as $m\rightarrow \infty$ and (2) we can interchange space and time averages~\cite{thesis_SKO}. 
\begin{algorithm}
\caption{DMD for Stochastic Koopman Operator}\label{algo:SKO}
\begin{algorithmic}[1]
        \State Given single trajectory data $\{x_j\}_{j = 0}^{m}$ and $n$ different SKO observables  $\{\hat{h}_i\}_{i =1}^n$ with $m\geq n$.
        \State Compute $(\Psi_m)_{ij} = \hat{h}_i(x_{j-1})$  and $(\Phi_m)_{ij} = \hat{h}_i(x_{j})$, $1\leq i \leq n$, $1 \leq j \leq  m $.
        \State \Return $S_m = \Phi_m\Psi_m^\dagger$, where ${}^\dagger$ is the matrix pseudoinverse.
\end{algorithmic}
\end{algorithm}

There are other proposed extensions to computing quantities related to the SKO.   For example,  one can find a matrix approximation using a stochastic Taylor expansion in the expression of the infinitesimal generator~\cite{yuanchao2024}. One can leverage similar ideas in the Residual DMD~\cite{colbrook2024rigorous,ResDMD} to compute the variance of the prediction~\cite{colbrook2024beyond}. Finally,  one can orthogonally project future snapshots onto the subspace of past snapshots to deal with both random dynamics and noisy observations~\cite{Takeish2025}. 

\section{The Distributional Koopman Operator}\label{sec:DKO}
In this section, we define the DKO and discuss how to formulate it as an operator between Hilbert spaces so that we can make sense of its spectral properties.   One can view the DKO as extending the SKO to allow for the dynamical system's state to be described by a probability distribution instead of a location in state space.  

\subsection{Definition of the DKO}
The central idea of our DKO is for the operator to act on continuous functions defined on $\mathcal{P}(M)$,  not bounded functions defined on $M$.   For example,  a DKO observable could represent the mean or the variance of a function of the random state. The DKO is written in terms of the transfer operator, analogous to the Koopman operator definition for deterministic dynamical systems. Therefore, the DKO is a deterministic operator because it describes how functions of probability distributions evolve under the (random but Markovian) dynamics.

\begin{definition}[DKO]\label{def:DKO}
Let $t\geq 0$.  Consider an RDS given by the map $\Phi:\mathbb{R} \times \Omega \times M \to M$ with transfer operator $T_t$ (see~\cref{def:transfer}). For any continuous and bounded function $h:\mathcal{P}(M)\rightarrow \mathbb{R}$ and any $\pi\in \mathcal{P}(M)$,  the DKO is given by
 \[
 \left[\mathcal{D}_t h\right]\!(\pi) = h(T_t(\pi))\,.
 \]
\end{definition}
\Cref{def:DKO} tells us that $\left[\mathcal{D}_t h\right]\!(\pi)$ returns the value of the observable $h$ at $T_t(\pi)$. In practice,  one could estimate the value of $\left[\mathcal{D}_t h\right]\!(\pi)$ as follows: Draw samples hundreds of times from $\pi$,  evolve those samples under the random dynamics for time $t$ without necessarily tracking each trajectory, and then evaluate $h$ at the resulting ensemble distribution $T_t(\pi)$. 

For any $t\geq 0$,  the DKO in~\Cref{def:DKO} has the usual properties of a Koopman operator, i.e., linearity and a semigroup property. 
\begin{lemma}\label{lem:DKO_Linear} The DKO satisfies the following properties:
\begin{enumerate}
    \item  Linearity: For any $\alpha,\beta\in\mathbb{R}$ and any continuous DKO observables $h$ and $g$, we have $\mathcal{D}_t (\alpha h + \beta g) = \alpha \mathcal{D}_th + \beta \mathcal{D}_tg$,
    \item Semigroup property: For any $s,t\geq 0$, we have $\mathcal{D}_{t + s} = \mathcal{D}_t \circ \mathcal{D}_s$.
\end{enumerate}
\end{lemma}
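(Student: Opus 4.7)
The plan is to derive both properties directly from \cref{def:DKO} by pushing the linearity and the semigroup property through the defining identity $[\mathcal{D}_t h](\pi) = h(T_t(\pi))$. The only nontrivial ingredient I will need is the semigroup property of the transfer operator $T_t$ referenced just after \cref{def:transfer} (and established in the cited \cref{lemma:Pt_semi_group}); everything else is pointwise evaluation.

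For the first part, I would fix $\pi \in \mathcal{P}(M)$ and compute
\[
\bigl[\mathcal{D}_t(\alpha h + \beta g)\bigr]\!(\pi) = (\alpha h + \beta g)\bigl(T_t(\pi)\bigr) = \alpha\, h\bigl(T_t(\pi)\bigr) + \beta\, g\bigl(T_t(\pi)\bigr) = \alpha\,[\mathcal{D}_t h](\pi) + \beta\,[\mathcal{D}_t g](\pi),
\]
where the middle equality is just linearity of pointwise addition and scalar multiplication of real-valued observables. Since this holds for every $\pi$, the operator identity $\mathcal{D}_t(\alpha h + \beta g) = \alpha \mathcal{D}_t h + \beta \mathcal{D}_t g$ follows. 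I would briefly note that $\alpha h + \beta g$ is still continuous and bounded on $\mathcal{P}(M)$, so it lies in the domain of $\mathcal{D}_t$.

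For the semigroup property, I would again evaluate at an arbitrary $\pi$ and chain the definitions:
\[
\bigl[(\mathcal{D}_t \circ \mathcal{D}_s) h\bigr]\!(\pi) = \bigl[\mathcal{D}_t(\mathcal{D}_s h)\bigr]\!(\pi) = (\mathcal{D}_s h)\bigl(T_t(\pi)\bigr) = h\bigl(T_s(T_t(\pi))\bigr) = h\bigl(T_{s+t}(\pi)\bigr) = [\mathcal{D}_{t+s} h](\pi),
\]
where the penultimate step invokes $T_{s+t} = T_s \circ T_t$ from the cocycle property of the transfer operator. Since $\pi$ is arbitrary, $\mathcal{D}_{t+s} = \mathcal{D}_t \circ \mathcal{D}_s$.

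There is no real obstacle here; the proof is bookkeeping. The only step that might warrant a sentence of justification is the appeal to the semigroup property of $T_t$, which is stated but not proved in the excerpt and is deferred to \cref{lemma:Pt_semi_group}. If the writeup needs to be self-contained, I would reference that lemma explicitly rather than re-derive it.
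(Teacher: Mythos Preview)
Your proof is correct and matches the paper's own argument essentially line for line: both parts are handled by pointwise evaluation at an arbitrary $\pi$, with linearity coming from the composition-operator structure and the semigroup property reduced to that of $T_t$ via \cref{lemma:Pt_semi_group}. The only cosmetic difference is that you chain $\mathcal{D}_t\circ\mathcal{D}_s$ toward $\mathcal{D}_{t+s}$ whereas the paper starts from $\mathcal{D}_{t+s}$, and you add the brief domain remark that $\alpha h+\beta g$ remains continuous and bounded.
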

\begin{proof}
The linearity property follows from the fact that $\mathcal{D}_t$ is a composition operator.   In particular, we have
    \[
    \left[\mathcal{D}_t (\alpha h + \beta g)\right]\!(\pi) \!= \!(\alpha h + \beta g)(T_t(\pi)) \!= \!\alpha h(T_t (\pi)) + \beta g(T_t(\pi)) \!= \!\alpha \left[\mathcal{D}_t h\right]\! (\pi) +\beta \left[\mathcal{D}_t g\right]\! (\pi) \,.
    \]
    The semigroup property of $\mathcal{D}_{t}$ follows from the semigroup property of the transfer operator $T_t$, i.e.,  for any $\pi \in\mathcal{P}(M)$, we have
    \[ \left[\mathcal{D}_{t + s} h\right]\! (\pi) = h(T_{t + s}(\pi)) = h(T_t(T_s(\pi))) =\left[ \mathcal{D}_th\right] \!(T_s(\pi)) = \left[\mathcal{D}_t\left[\mathcal{D}_s h\right]\right]\!(\pi)\,.\]
\end{proof}

\subsection{DKO Observables}\label{sec:DKOobservables}
For DKO, observables are not ordinary functions on the state space $M$; rather, they are continuous and bounded functions of probability distributions. That is,  the set of DKO observables can be written as 
\[
    H \coloneqq \{h\colon \mathcal{P}(M) \to \mathbb{R} : h \text{ is continuous and bounded}\}\,.
\]
The space of DKO observables can be categorized as a nested sequence of subspaces,  denoted by $H_1 \subset H_2 \subset \cdots \subset H_n \subset \cdots,$ where
\[
H_n \coloneqq \Bigl\{h\in H \;:\; \frac{\delta^k h(\pi)}{\delta \pi^k} = 0 \quad  k \geq n+1,\; \forall\, \pi \in \mathcal{P}(M) \Bigr\}\,,
\]
where $\delta^k h(\pi)/\delta \pi^k$ is the $k$th Fréchet derivative of $h$ with respect to $\pi$.  

In particular,  $H_1$ contains all the DKO observables such that second and higher-order derivatives are zero,  which implies that they are linear observables. Because of this,  we can write
\begin{equation}\label{eq:linear}
H_1 = \operatorname{span}\Biggl\{ h\colon \mathcal{P}(M) \to \mathbb{R} \,  : \, h(\pi) = \int_M \hat{h}(x)\,d\pi(x),\; \hat{h}\in \mathcal{L}^\infty(M) \Biggr\}\,.
\end{equation}
Therefore, even though $\mathcal{P}(M)$ is not a linear space,  the fact that $h(\pi)$ can be expressed as an integral means that for any $h\in H_1$, it holds that
$$
h(\alpha \pi_1 + (1-\alpha) \pi_2) = \alpha\, h(\pi_1) + (1-\alpha)\, h(\pi_2),  
$$ 
for any $\alpha$ between $0$ and $1$ and any $\pi_1,\pi_2\in\mathcal{P}(M)$.  As a result, $H_1$ can be naturally identified with the space of linear observables with respect to $\pi$.

It is quite natural to restrict the action of the DKO to linear observables that belong to $H_1$ because $H_1$ is always an invariant subspace of $\mathcal{D}_t$ as stated in~\cref{lemma:invariance1} below. 
\begin{lemma}\label{lemma:invariance1}
    The subspace $H_1$ of $H$ is always invariant under the action of $\mathcal{D}_t$. 
\end{lemma}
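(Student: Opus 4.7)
The plan is to take an arbitrary $h \in H_1$ and explicitly exhibit $\mathcal{D}_t h$ in the form that characterizes $H_1$, namely as the integral against $\pi$ of a bounded measurable function on $M$. By linearity, it suffices to treat a single generator $h(\pi) = \int_M \hat{h}(x)\,d\pi(x)$ with $\hat{h} \in \mathcal{L}^\infty(M)$.

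First I would unfold the definition: $[\mathcal{D}_t h](\pi) = h(T_t \pi) = \int_M \hat{h}(x)\,d(T_t \pi)(x)$. Next I would invoke \cref{def:transfer} to rewrite this as
\[
\int_\Omega \int_M \hat{h}(x)\,d\bigl(\Phi_t(\omega,\cdot)\#\pi\bigr)(x)\,dp(\omega)
= \int_\Omega \int_M \hat{h}\bigl(\Phi_t(\omega,x)\bigr)\,d\pi(x)\,dp(\omega),
\]
using the standard change-of-variables formula for pushforward measures. Then I would apply Fubini--Tonelli to swap the order of integration, which is justified because $\hat{h}$ is bounded and everything in sight is measurable, so the product measure $\pi \otimes p$ is finite and $\hat{h}\circ \Phi_t$ is integrable. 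This gives
\[
[\mathcal{D}_t h](\pi) = \int_M \Bigl[\int_\Omega \hat{h}\bigl(\Phi_t(\omega,x)\bigr)\,dp(\omega)\Bigr]\,d\pi(x) = \int_M [\mathcal{S}_t \hat{h}](x)\,d\pi(x),
\]
where I have recognized the inner integral as the SKO acting on $\hat{h}$ via \cref{def:SKO}.

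Finally I would observe that $\mathcal{S}_t \hat{h} \in \mathcal{L}^\infty(M)$, since an essentially bounded function remains essentially bounded under taking expectations (indeed $\|\mathcal{S}_t \hat{h}\|_\infty \leq \|\hat{h}\|_\infty$). Hence $\mathcal{D}_t h$ is of the form $\pi \mapsto \int_M \tilde{h}(x)\,d\pi(x)$ with $\tilde{h} = \mathcal{S}_t \hat{h} \in \mathcal{L}^\infty(M)$, placing it in $H_1$ by the characterization in \cref{eq:linear}. Extending by linearity to finite linear combinations of such integral functionals completes the argument.

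I do not expect a serious obstacle here; the only subtlety is making sure Fubini applies, which is immediate from boundedness and measurability of $\hat{h}\circ \Phi_t$ together with finiteness of $\pi$ and $p$. A minor bookkeeping point is that $\mathcal{S}_t \hat{h}$ is measurable on $M$, which follows from the measurability assumptions on $\Phi$ in the definition of an RDS together with a standard Fubini/monotone class argument.
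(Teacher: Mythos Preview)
Your proof is correct and follows essentially the same approach as the paper: unfold the definition of $T_t$, use the change-of-variables for pushforwards, swap integrals via Fubini (justified by boundedness), and bound the resulting inner integral by $\|\hat{h}\|_\infty$. The only cosmetic difference is that you name the inner integral as $\mathcal{S}_t\hat{h}$, whereas the paper calls it $\hat{g}$ and saves the identification with the SKO for the subsequent section.
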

\begin{proof}
We must show that $\mathcal{D}_t h \in H_1$ for any $h\in H_1$. For any $h\in H_1$, we have 
    \begin{align*}
         \left[\mathcal{D}_t h\right]\! (\pi) = h(T_t(\pi))) &= \int_\Omega \int_M  \hat{h}(x) d(\Phi_t(\omega, \cdot)\# \pi) dp(\omega) \\
         &=\int_\Omega \int_M \hat{h}(\Phi_t(\omega, x)) d\pi(x) dp(\omega)  \\
         & = \int_M \Big(\int_\Omega \hat{h}(\Phi_t(\omega, x)) dp(\omega) \Big)\,d\pi(x)  = \int_M \hat{g}(x) d\pi(x)\,,
    \end{align*}
    where $\hat{g}(x) =\int_\Omega \hat{h}(\Phi_t(\omega, x)) dp(\omega) $. Here, we can swap the integral signs because of Fubini's theorem. Thus, $\mathcal{D}_t h \in H_1$ provided $\hat{g} \in \mathcal{L}^\infty(M)$.  Using H\"older's inequality, we find that 
\[
        \|\hat{g}(x)\|_\infty = \Big\|\int_\Omega\hat{h}(\Phi_t(\omega, x)) dp(\omega)  \Big\|_\infty \leq  \|\hat{h}\|_\infty \int_\Omega dp(\omega)= \|\hat{h}\|_\infty < \infty\,,
\]
where $\|\cdot\|_\infty$ is the supremum norm over $M$. 
\end{proof}
Therefore, if we are only interested in linear observables in the DKO framework,  we can safely work with the space $H_1$ as it is an invariant subspace of $H$ under the action of $\mathcal{D}_t$. 

However, we can also look at other types of observables. The subspace $H_2$ comprises all the ``quadratic" DKO observables. These are the observables for whose Fréchet derivatives vanish beyond the second order, allowing us to express $H_2$ as
\[
    H_2 = \operatorname{span}\Biggl\{ h(\pi) = \Bigl(\int_M \hat{h}_1(x)\,d\pi(x)\Bigr) \Bigl(\int_M \hat{h}_2(x)\,d\pi(x)\Bigr) \,: \,\hat{h}_1,\hat{h}_2\in \mathcal{L}^\infty(M) \Biggr\}.
\]
It is important to notice that the variance of any $\hat{h}\in \mathcal{L}^\infty(M)$ over a given distribution $\pi$ is given by
$$
\operatorname{Var}_{\hat{h}}(\pi) = \int_M \hat{h}^2(x)\,d\pi(x) - \Biggl(\int_M \hat{h}(x)\,d\pi(x)\Biggr)^2,
$$
and therefore, is a function in $H_2$. Although the variance $\operatorname{Var}_{\hat{h}}(\pi)$ is nonlinear with respect to $\hat{h}$, it is a continuous and bounded observable with respect to $\pi$, which makes it a natural quantity of interest to study under the proposed DKO framework. One big difference between the DKO and the SKO is that our DKO framework can also be applied to predict and analyze properties of random variables' variances, not just their expectations (see~\cref{subsec:variance}).  

In principle,  one can also select DKO observables from $H\setminus H_2$, e.g., higher-order central moment of the random state variable. However, in this paper, we restrict our attention to DKO observables from $H_1$ and $H_2$. 

\subsection{The DKO as a Generalization of the SKO}\label{sec:dko_sko_connection}
The SKO can be viewed as a specific case of the DKO, and this relationship can be made precise by restricting the DKO to its invariant subspace $H_1$. More specifically, there are two distinct approaches to obtaining the SKO from the DKO:
\begin{itemize}[noitemsep,leftmargin=*]
    \item If one restricts the DKO observables to be in $H_1$,  then for any $h\in H_1$ and $x\in M$,  we have 
\[
\left[\mathcal{S}_t \hat{h}\right]\! (x) = \left[\mathcal{D}_t h\right]\! (\delta_x)\,,
\]
where $\hat{h}$ is defined by $\hat{h}(x) := h(\delta_x)$ and $\delta_x$ is the Dirac measure centered at $x$.   Hence,  we can view the SKO as a restricted version of the DKO on single Dirac delta measures over  $M$.

\item Another interesting way to view the DKO as a generalization of the SKO is if one restricts the DKO observables to be in $H_1$, then on the one hand, from~\Cref{def:DKO} and the proof of~\Cref{lemma:invariance1},  we have 
\[
    \left[\mathcal{D}_{t} h\right] \! (\pi) =  \int_M \left( \int_\Omega \hat{h}(\Phi_{t}(\omega, x)) dp(\omega)\right) d\pi(x)
\]
for any $h\in H_1$ and $\hat{h}$ satisfying $h(\pi) = \int_M \hat{h}(x) d\pi(x)$, $\forall \pi \in \mathcal{P}(M)$.  On the other hand,  we have 
\[
    \left[\mathcal{S}_{t} \hat{h}\right]\!(x) = \int_\Omega \hat{h}(\Phi_{t}(\omega, x)) dp(\omega).
\]
Thus, we find that 
\[
 \left[\mathcal{D}_{t} h\right] \! (\pi)  = \int_M \left[\mathcal{S}_{t} \hat{h}\right]\!(x) d\pi (x) = \mathbb{E}_{X\sim \pi}\!\!\left[\left[\mathcal{S}_{t} \hat{h}\right]\!(X)\right].
 \]
Hence, we can view the DKO restricted to $H_1$ observables as an \textit{averaged version} of the SKO,  where the average is done by the given probability distribution $\pi$.   
\end{itemize}
\smallskip

It is useful to remember that the DKO is a generalization of the SKO when linear DKO observables are used,  except that the DKO does not need single trajectories starting from each $x\in M$ or their tracking information.  To highlight this point even further, we note that the eigenvalues and eigenfunctions of $\mathcal{S}_{t}$ are also eigenvalues and eigenfunctions of $\mathcal{D}_{t}$. 

\begin{lemma}\label{lemma:eigen}
    Let $t\geq 0$ and $\lambda$ be an eigenvalue of $\mathcal{S}_{t}$ with eigenfunction $\hat{h}$. Then, $h(\pi) = \int \hat{h}(x) d\pi(x)$ is an eigenfunction of $\mathcal{D}_{t}$ with eigenvalue $\lambda$. 
\end{lemma}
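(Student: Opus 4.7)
The plan is to exploit the ``averaged version'' identity derived in \cref{sec:dko_sko_connection}, which already does almost all of the work. Concretely, for $\hat{h} \in \mathcal{L}^\infty(M)$ and $h(\pi) = \int_M \hat{h}(x)\,d\pi(x) \in H_1$, we have the identity
\[
[\mathcal{D}_t h](\pi) = \int_M [\mathcal{S}_t \hat{h}](x)\, d\pi(x), \qquad \forall\, \pi \in \mathcal{P}(M).
\]
This is the only nontrivial ingredient, and it is established in the bullet-point discussion preceding the lemma.

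First I would verify that $h$ is a legitimate element of $H_1$: since $\hat{h}$ is an $\mathcal{S}_t$-eigenfunction, we have $\hat{h} \in \mathcal{L}^\infty(M)$, so by the definition of $H_1$ in~\cref{eq:linear}, the function $h(\pi) = \int_M \hat{h}(x)\,d\pi(x)$ is indeed a DKO observable in $H_1$. In particular, $\mathcal{D}_t h$ is well-defined, and \cref{lemma:invariance1} guarantees $\mathcal{D}_t h \in H_1$.

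Next I would substitute the eigenvalue equation $[\mathcal{S}_t \hat{h}](x) = \lambda \hat{h}(x)$ into the averaged identity and pull the scalar $\lambda$ outside the integral:
\[
[\mathcal{D}_t h](\pi) = \int_M [\mathcal{S}_t \hat{h}](x)\, d\pi(x) = \int_M \lambda \hat{h}(x)\, d\pi(x) = \lambda \int_M \hat{h}(x)\, d\pi(x) = \lambda\, h(\pi).
\]
Since this holds for every $\pi \in \mathcal{P}(M)$, we conclude $\mathcal{D}_t h = \lambda h$, as desired.

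There is no genuine obstacle here; the lemma is essentially a corollary of the averaged-version identity, and the only thing to be careful about is ensuring $h \in H_1$ so that the identity applies. If one wanted to be fully self-contained and not cite the identity, the alternative would be to unfold $[\mathcal{D}_t h](\pi) = h(T_t(\pi))$ directly using \cref{def:transfer} and Fubini's theorem, mirroring the computation in the proof of \cref{lemma:invariance1}, and then recognize the inner integral as $[\mathcal{S}_t \hat{h}](x) = \lambda \hat{h}(x)$; this adds nothing new but makes the argument stand alone.
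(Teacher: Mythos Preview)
Your proposal is correct and follows essentially the same approach as the paper: both invoke the averaged-version identity $[\mathcal{D}_t h](\pi) = \int_M [\mathcal{S}_t \hat{h}](x)\,d\pi(x)$ from \cref{sec:dko_sko_connection}, substitute the eigenvalue relation $\mathcal{S}_t \hat{h} = \lambda \hat{h}$, and pull out the scalar. Your additional remark verifying $h \in H_1$ is a nice bit of extra care that the paper leaves implicit.
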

\begin{proof}
Using the fact that $[\mathcal{S}_{\Delta t} \hat{h}](x) = \lambda \hat{h}(x)$, we find that for any $\pi\in\mathcal{P}(M)$, we have
\[
     [\mathcal{D}_{t} h](\pi) = \int [\mathcal{S}_{t}\hat{h}](x) d\pi(x) = \int \lambda \hat{h}(x) d\pi(x) = \lambda h(\pi)\,.
\]
\end{proof}

Due to the relationship between $\mathcal{S}_t$ and $\mathcal{D}_t$, the DKO framework is most useful when a single trajectory from the RDS is not available (as that is required by~\cref{algo:SKO}) or if one wants to work with nonlinear DKO observables, such as the variance (see~\cref{subsec:variance}).

\subsection{Finite-Dimensional Approximation of DKO}\label{sec:FiniteDimDKO}
While the concept of the DKO is a useful analytical notion, in practice, we only have access to information from $n$ observables applied to $m$ probability distribution snapshots. That is, for some $\Delta t>0$ and $n$ observables $h_1,\ldots,h_n\in H$, we are given $m$ probability distribution snapshots from the RDS of the form:
\[
\begin{aligned} 
h_1(\pi_j), \ldots, h_n(\pi_j), \qquad 1\leq j\leq m,\\
h_1(\mu_j), \ldots, h_n(\mu_j), \qquad 1\leq j\leq m,
\end{aligned}
\]
where $\mu_j = T_{\Delta t}(\pi_j)$ for $1\leq j\leq m$ and $T_{\Delta t}$ is the transfer operator associated with an RDS. We assume that one has collected enough probability distribution snapshot data such that $m\geq n$. If one has access to a single long trajectory of probability distribution snapshots $\{\pi_1, \dots, \pi_{m}, \pi_{m+1}\}$, then one can construct the required pairs by defining $\mu_j = \pi_{j+1}$, $1\leq j \leq m$.

In the same way as the DMD for deterministic dynamical systems, we select a matrix $D_m$ that represents $\mathcal{D}_{\Delta t}$ when restricting its input and output to $V_n = \text{span}\{h_1, \dots, h_n\}$. Since $g(\pi) = \sum_{i=1}^n g_i h_i(\pi)$ for an observable $g \in V_n$ with coefficients $\{g_i\}_{i=1}^n$ and $\left[\mathcal{D}_{\Delta t} g\right]\!(\pi) = g\left(T_{\Delta t}(\pi)\right)$ for all $\pi\in\mathcal{P}(M)$, we would like to find a matrix $D_m$ such that
\begin{equation} 
\sum_{i=1}^n g_{i} h_i(\mu_j) \approx \sum_{i=1}^ng_{i} \sum_{k=1}^n (D_m)_{ik} h_k(\pi_j), \qquad 1\leq j\leq m.
\label{eq:DMDDKO}
\end{equation} 
If we define $(\Psi_m)_{ij} = h_i(\pi_j)$ and $(\Phi_m)_{ij} = h_i(\mu_j)$ for $1\leq i\leq n$ and $1\leq j\leq m$, then~\cref{eq:DMDDKO} becomes
\begin{equation}
\mathbf{g}^\top\Phi_m \approx \mathbf{g}^\top D_m\Psi_m.
\label{eq:approx}
\end{equation}
Since there may be no choice of $D_m$ for which the approximation sign in~\cref{eq:approx} is equality, we solve for $D_m$ as the solution to the least-squares problem: 
\begin{equation}
\min_{D_m\in\mathbb{R}^{n\times n}} \| \Phi_m - D_m\Psi_m \|_F^2, 
\label{eq:LeastSquaresProblem}
\end{equation} 
where $\|\cdot\|_F$ is the matrix Frobenius norm. The least-squares problem in~\cref{eq:LeastSquaresProblem} has a unique solution provided that $\Psi_m$ has full row rank~\cite[Sec.~5.5]{golub2013matrix}.  Therefore,  
\[
D_m = \Phi_m\Psi_m^\dagger, \qquad \Psi_m^\dagger = \Psi_m^\top\left(\Psi_m\Psi_m^\top\right)^{-1}
\]
is our DMD approximation to $\mathcal{D}_{\Delta t}$ (see~\cref{algo:DKO}).  Equivalently, $D_m$ solves the normal equations associated with~\cref{eq:LeastSquaresProblem} so it is the solution to  
\begin{equation}
\Phi_m\Psi_m^\top = D_m\Psi_m\Psi_m^\top.
\label{eq:NormalEquationsDKO}
\end{equation} 

\begin{algorithm}
\caption{Dynamic Mode Decomposition for DKO}\label{algo:DKO}
\begin{algorithmic}[1]
        \State Given $m$ probability distributions $\{\pi_j\}_{j = 1}^{m}$ and $\{\mu_j\}_{j = 1}^{m}$ such that $\mu_{j} = T_{\Delta t}(\pi_j)$, and $n$ different DKO observables $\{h_i\}_{i =1}^n$, where $m\geq n$.
        \State Compute $(\Psi_m)_{ij} = h_i(\pi_j)$  and $(\Phi_m)_{ij} = h_i(\mu_j)$, $1\leq i \leq n$, $1 \leq j \leq  m $.
        \State \Return $D_m = \Phi_m\Psi_m^\dagger$, where ${}^\dagger$ is the Moore--Penrose pseudoinverse.
\end{algorithmic}
\end{algorithm}

It should be noted that the choice of observables $\{h_i\}$ affects the quality of the DMD approximation to $\mathcal{D}_{\Delta t}$. Carefully selected observables and distributions can yield more accurate finite-dimensional approximations of the operator. As with standard DMD, the spectrum and modes of $D_m$ provide insight into the dominant dynamics captured by the DKO.

\Cref{algo:DKO} is fully data-driven as it does not require explicit knowledge of the transfer operator or the underlying RDS, only access to observables evaluated at pairs of input-output distributions.

\section{Placing DKO Observables into a Hilbert Structure}\label{sec:DKOHilbert}
To study the convergence properties of the DMD approximation and the spectral properties of the DKO, we must first endow the space of observables with a Hilbert space structure. However, this is non-trivial because the DKO acts on probability distributions over a state space $M$, i.e., on elements of $\mathcal{P}(M)$, which is infinite-dimensional and lacks linear structure. Consequently, classical $\mathcal{L}^2$ techniques do not apply directly. We overcome this by lifting the problem into the probabilistic setting of random measures, which allows us to build an $\mathcal{L}^2$-like structure on the space of DKO observables via sampling.

\subsection{Random Measures as a Path to Hilbert Structure}
Let $\Theta$ be a parameter space, and suppose $(\Theta, \mathcal{F}, \mathbb{P})$ is a probability space where the strong law of large numbers holds~\cite[Thm.~11.4.1]{dudley2018real}. We equip the space $\mathcal{P}(M)$ with the weak-* topology associated with the convergence of integrals of bounded continuous functions. The following definition of a random measure~\cite{random_measure_book} allows us to consider a probability measure over $\mathcal{P}(M)$, informally, a ``distribution over distributions.''
\begin{definition}[Random Measure]\label{def:random_measure}
A random measure is a measurable map given by 
\[
\Lambda: (\Theta,\mathcal{F},\mathbb{P}) \to \mathcal{P}(M),
\]
with $\mathcal{P}(M)$ endowed with the $\sigma$-algebra associated with the weak-* topology. 
\end{definition}
Random measures are widely used in Wasserstein statistics~\cite{statistics_OT_book,random_measure_book,review_statistics_wasserstein} where each data point is regarded as a probability distribution. 

\subsection{Constructing Dense Random Measures}
In our paper, we  work with ``dense" random measures, which satisfy the following conditions: 
\begin{enumerate}[noitemsep]
    \item The range of $\Lambda$ is dense in $\mathcal{P}(M)$, and 
    \item The support of probability distributions $\mathbb{P}$ is the whole space $\Theta$.
\end{enumerate}
We now give concrete examples of dense random measures $\Lambda$ using a construction rooted in set-theoretic bijections between uncountable and countable-dimensional rationals.

Let $\phi: \mathbb{R} \to \mathbb{Q}^\mathbb{N}$ be a bijection (which exists because $\aleph_0^{\aleph_0} = 2^{\aleph_0}$~\cite[p.~38]{jech2006set}), and define its $d$-dimensional extension as
\[
\phi^{(d)}: \mathbb{R}^d \to (\mathbb{Q}^d)^\mathbb{N}, \qquad \phi^{(d)}(x) = (\phi(x_1), \dots, \phi(x_d)).
\]
Also let $\alpha: [0,1] \to \mathbb{R}$ be a bijection, e.g., $\alpha(x) = \tan\left(\frac{\pi}{2}(2x - 1)\right)$, and define $\alpha^{(d)}$ analogously to $\phi^{(d)}$. Composing $\phi^{(d)}$ and $\alpha^{(d)}$ together gives a bijection
\[
f^{(d)}:[0, 1]^d\to (\mathbb{Q}^d)^\mathbb{N}, \qquad f^{(d)} = \phi^{(d)}\circ \alpha^{(d)},
\]
whose output is an infinite sequence of vectors of length $d$ with rational entries.

\begin{example}[Empirical Measures]\label{ex:empirical}
Let $\Theta = [0,1]^d$, $\mathbb{P}$ the uniform measure on $\Theta$, and define
\[
\Lambda(x) = \lim_{ m \to \infty} \frac{1}{m}\sum_{j = 1}^m \delta_{y_j},\qquad y_j = (f^{(d)}(x))_j.
\]
Then $\Lambda(\Theta)$ is the set of all empirical measures with rational support, which is dense in $\mathcal{P}(M)$, and $\mathbb{P}$ is fully supported on $\Theta$. We conclude that $\Lambda$ is a dense random measure. 
\end{example}

\begin{example}[Gaussian Mixtures]
Let $\Theta = [0,1]^d \times [0,1]^{d \times d}$, $\mathbb{P}$ the uniform measure on $\Theta$, and $\theta = (\mu, \Sigma)$. Then, the measure
\[
\Lambda(\theta) = \lim_{m \to \infty} \frac{1}{m}\sum_{i = 1}^m  \mathcal{N}(\hat{\mu}_{i}, \hat{\Sigma}_i), 
\]
where $\hat{\mu}_i$ is the $i$th vector in the rational sequence $f^{(d)}(\mu)$ and $\hat{\Sigma}_i$ is the matrix such that $(\hat{\Sigma}_i)_{jk}$ is the $i$th rational value in the sequence $f^{(d)}(\Sigma_{jk})$. In other words, $\Lambda$ is an infinite Gaussian mixture \cite{rasmussen1999infinite} with rational coefficients, rational means, and covariance matrices with rational entries. This model is dense in $\mathcal{P}(M)$~\cite{statistics_OT_book}. Moreover, the uniform measure $\mathbb{P}$ is supported on all of $\Theta$. We conclude that this Gaussian mixture random measure is dense. 
\end{example}

\subsection{Constructing a Hilbert Space on $H$}
To build a Hilbert space structure on the space $H$ of observables, we leverage the random measure $\Lambda: \Theta \to \mathcal{P}(M)$ and consider the pushforward measure $\mathbf{P} = \Lambda \# \mathbb{P}$ on $\mathcal{P}(M)$. This allows us to define an $\mathcal{L}^2$-like structure on $H$ via Monte Carlo sampling. The requirement of $\Lambda$ being dense in $\mathcal{P}(M)$ ensures that we have a well-defined inner product over $H$, not just its equivalent classes.

Suppose $\{\pi_j\}_{j=1}^m \subset \mathcal{P}(M)$ are i.i.d.~samples drawn from $\mathbf{P}$, and let $h \in H$. Then, we have 
\[
\frac{1}{m} \sum_{j=1}^m |h(\pi_j)|^2 = \int |h(\pi)|^2 \, d\mathbf{P}_m(\pi), \qquad \mathbf{P}_m= \frac{1}{m} \sum_{j=1}^m \delta_{\pi_j},
\]
where $\delta_{\pi_j}$ denotes the Dirac measure centered at $\pi_j$. That is, for any bounded measurable function $f: \mathcal{P}(M) \to \mathbb{R}$, we have
\[
\int f(\pi) \, \delta_{\pi_j}(\pi) d\pi = f(\pi_j).
\]

To understand convergence as $m \to \infty$, let $f: \mathcal{P}(M) \to \mathbb{R}$ be continuous and bounded, and suppose $\theta_j \in \Lambda^{-1}(\pi_j)$. Then, we have
\[
    \frac{1}{m} \sum_{j=1}^m f(\pi_j) 
    = \frac{1}{m} \sum_{j=1}^m f \circ \Lambda(\theta_j) 
    = \int f \circ \Lambda(\theta) \, d\mathbb{P}_m(\theta), \qquad \mathbb{P}_m = \frac{1}{m} \sum_{j=1}^m \delta_{\theta_j}.
\]
By the strong law of large numbers and the fact that $f \circ \Lambda$ is bounded, we have $\mathbb{P}_m\rightarrow\mathbb{P}$ almost surely and hence,
\[
\lim_{m\rightarrow \infty} \frac{1}{m} \sum_{j=1}^m f(\pi_j) = \int f(\pi) \, d\mathbf{P}(\pi).
\]
In particular, for any $h \in H$, we conclude that
\[
\lim_{m\rightarrow \infty} \frac{1}{m} \sum_{j=1}^m |h(\pi_j)|^2 = \int |h(\pi)|^2 \, d\mathbf{P}(\pi).
\]

The derivation above motivates us to define the following inner product on $H$, which can be approximated using finitely many probability distributions:
\begin{equation} \label{eq:inner_product}
    \langle h_1, h_2 \rangle_\mathbf{P} = \int h_1(\pi) h_2(\pi) \, d\mathbf{P}(\pi), \qquad \text{for all } h_1, h_2 \in H.
\end{equation}

\begin{proposition} \label{prop:inner_product}
Let $(\Theta, \mathcal{F}, \mathbb{P})$ be a probability space and $\Lambda: \Theta \to \mathcal{P}(M)$ a dense random measure. Then,~\cref{eq:inner_product} is an inner product on $H$.
\end{proposition}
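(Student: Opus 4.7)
The plan is to verify the four defining properties of an inner product for $\langle\cdot,\cdot\rangle_\mathbf{P}$: bilinearity, symmetry, nonnegativity, and positive definiteness. The first three are essentially immediate from the definition in~\cref{eq:inner_product}. Bilinearity follows from linearity of the Lebesgue integral; symmetry follows from commutativity of pointwise multiplication; and nonnegativity follows from $|h(\pi)|^2 \geq 0$. Since every $h \in H$ is continuous and bounded on $\mathcal{P}(M)$, the integrand $h_1(\pi) h_2(\pi)$ is bounded and measurable, hence $\mathbf{P}$-integrable, so these checks reduce to a few lines and use nothing beyond the definition of $\mathbf{P}$.

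The substantive content is positive definiteness. Suppose $\langle h, h \rangle_\mathbf{P} = 0$ for some $h \in H$; I want to conclude $h \equiv 0$ on $\mathcal{P}(M)$. Applying the change-of-variables formula for the pushforward $\mathbf{P} = \Lambda \# \mathbb{P}$, the hypothesis becomes
\[
\int_\Theta h(\Lambda(\theta))^2 \, d\mathbb{P}(\theta) = 0,
\]
so $h \circ \Lambda = 0$ on a set $A \subseteq \Theta$ with $\mathbb{P}(A) = 1$. Equivalently, the weak-$*$ closed set $C = \{\pi \in \mathcal{P}(M) : h(\pi) = 0\}$ (closed because $h$ is continuous) contains $\Lambda(A)$, and therefore $\mathbf{P}(C) = 1$.

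The remaining task, and the main obstacle, is to upgrade $\mathbf{P}(C) = 1$ to $C = \mathcal{P}(M)$; this is precisely where the two conditions defining a dense random measure must be used together. My plan is to show that $\mathrm{supp}(\mathbf{P}) = \mathcal{P}(M)$. For any nonempty weak-$*$ open $U \subseteq \mathcal{P}(M)$, density of the range of $\Lambda$ yields some $\theta_0 \in \Theta$ with $\Lambda(\theta_0) \in U$, so $\Lambda^{-1}(U)$ is a nonempty measurable subset of $\Theta$; combined with the fact that $\mathrm{supp}(\mathbb{P}) = \Theta$, this forces $\mathbb{P}(\Lambda^{-1}(U)) > 0$, and hence $\mathbf{P}(U) > 0$. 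With full support in hand, the open set $\{\pi : h(\pi) \neq 0\}$ has $\mathbf{P}$-measure zero and therefore must be empty, giving $h \equiv 0$.

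The delicate point is the passage from \emph{some} $\theta_0 \in \Lambda^{-1}(U)$ to \emph{positive $\mathbb{P}$-measure} of $\Lambda^{-1}(U)$, since $\Lambda$ is only assumed to be measurable. I expect this step either to lean on the specific bijection-based constructions of $\Lambda$ in~\cref{ex:empirical} and the Gaussian-mixture example (where preimages of basic weak-$*$ neighborhoods are readily seen to be $\mathbb{P}$-substantial) or to be rephrased as a direct density argument: the subset $\Lambda(A) \subseteq C$ is dense in $\Lambda(\Theta)$, which in turn is dense in $\mathcal{P}(M)$, so the closed set $C$ already equals $\mathcal{P}(M)$ without passing through support considerations. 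Either phrasing completes the verification.
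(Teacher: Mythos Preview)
Your approach is essentially the paper's: verify the easy axioms, then for positive definiteness pass from $\langle h,h\rangle_{\mathbf P}=0$ to $h=0$ $\mathbf P$-a.e., argue that $\mathbf P$ has full support on $\mathcal P(M)$, and conclude by continuity of $h$. You have in fact been more scrupulous than the paper on the one nontrivial step. The paper simply asserts that ``the support of $\mathbf P$ is the closure of $\Lambda(\Theta)$'' and proceeds; this is exactly the passage you flag as delicate, since with $\Lambda$ merely measurable the preimage $\Lambda^{-1}(U)$ of a weak-$*$ open set need not be open in $\Theta$, and $\mathrm{supp}(\mathbb P)=\Theta$ only guarantees positive measure for \emph{open} nonempty sets. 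So the gap you identify is real and is present in the paper's own argument as well. Your proposed alternative phrasing---that $\Lambda(A)$ is dense in $\Lambda(\Theta)$ because $\mathbb P(A)=1$---runs into the same obstruction: a full-measure set $A$ is dense in $\Theta$, but without continuity of $\Lambda$ one cannot transfer density through $\Lambda$. In short, your write-up matches the paper's proof and is more honest about where an extra hypothesis (e.g., continuity of $\Lambda$, or a direct assumption that $\mathrm{supp}(\mathbf P)=\mathcal P(M)$) is implicitly being used.
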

\begin{proof}
We verify the three properties of an inner product: (1) symmetry, (2) linearity in the first argument, and (3) positive definiteness.  

Symmetry is immediate from the definition. Moreover, linearity in the first argument follows from the fact that integrals are linear operators. For positive definiteness, for any $h \in H$, we have
\[
\langle h, h \rangle_\mathbf{P} = \int |h(\pi)|^2 \, d\mathbf{P}(\pi) \geq 0.
\]

Finally, when $\langle h, h \rangle_\mathbf{P} = 0$, we have $h(\pi) = 0$ for $\mathbf{P}$-almost every $\pi$. Since $\mathbf{P} = \Lambda \# \mathbb{P}$ and the support of $\mathbb{P}$ is the whole of $\Theta$, the support of $\mathbf{P}$ is the closure of $\Lambda(\Theta)$.  Seeking a contradiction, suppose that $h(\pi_0) \neq 0$ for some $\pi_0 \in \Lambda(\Theta)$. Then, by the continuity of $h$, an open set $U$ exists in the closure of $\Lambda(\Theta)$ such that $h(\pi) \neq 0$ for $\pi\in U$. This contradicts the fact that $h(\pi) = 0$ for $\mathbf{P}$-almost every $\pi$ because the support of $\mathbf{P}$ is dense in $\mathcal{P}(M)$.  Finally, since $\Lambda(\Theta)$ is dense in $\mathcal{P}(M)$ and $h$ is continuous, we conclude that $h(\pi) = 0$ for all $\pi \in \mathcal{P}(M)$, i.e., $h = 0$. Therefore, we have proved that $\langle \cdot, \cdot \rangle_\mathbf{P}$ defines an inner product on $H$. 
\end{proof}

It is worth noting that if the range of the random measure $\Lambda$ is not dense in $\mathcal{P}(M)$, then the inner product defined in~\cref{eq:inner_product} is only an inner product in the space of equivalence classes $H/_\sim$, where $h = 0$ implies that $h(\pi) = 0$ $\mathbf{P}$-almost everywhere. Although one can state the convergence results in~\cref{sec:Convergence} in terms of observables in the space $H/_\sim$, such results might not be informative. This is because the DMD approximation is not valid if the observables are evaluated at distributions outside the closure of the support of $\mathbf{P}$. For deterministic dynamical systems, where the space of observables is in $\mathcal{L}^2(\nu)$ with $\nu$ being the invariant measure supported everywhere on the attractor, the behavior outside of the support of $\nu$ is irrelevant for the long-time dynamics. In this case, predictions of the observables in a $\nu$-almost everywhere sense are acceptable. However, in the stochastic setting, there is no restriction on the uncertainty, so applying the DMD matrix approximation on distributions outside the support of $\mathbf{P}$ can still reveal relevant information about the dynamics.

Since $[\mathcal{S}_t \hat{h}](x) = [\mathcal{D}_t h](\delta_x)$, we want the matrix approximations $S_m $ and $D_m$ to be close when applied to the coefficients of $h$ in the $V_n$ basis and evaluated at $\delta_x$, but we cannot be sure that this is the case when $\delta_x$ is not in the support of $\mathbf{P}$. It is important to remember that we assume $\pi_1, \dots, \pi_m$ are sampled i.i.d.~from $\mathbf{P}$. In practice, the data is represented as empirical distributions. Hence, we would like to select a random measure that allows all possible empirical measures as samples, which is exactly the dense random measure presented in~\cref{ex:empirical}.


\subsection{Constructing a Hilbert--Schmidt Norm}
To quantify the difference between DKO and its approximations, we wish to utilize the Hilbert--Schmidt norm for operators mapping the Hilbert space $H$ to itself. To this end, suppose we select the following subspace of DKO observables $V_n = \text{span}\{h_1, \dots, h_n\}$, where the $h_1,\ldots,h_n$ form a linearly independent basis for $V_n$.  We can define a linear operator $\mathcal{E}: V_n\rightarrow H$ by restricting the input space of the DKO to $V_n$. We define the Hilbert--Schmidt norm of $\mathcal{E}$ as 
\begin{equation} 
\|\mathcal{E}\|_{\text{HS}} = \|G^{-1}E\|_{F},
\label{eq:HilbertSchmidt} 
\end{equation} 
where $G_{ij} = \langle h_i,h_j\rangle_{\mathbf{P}}$ and $E_{ij} = \langle \mathcal{E}h_i,\mathcal{E}h_j\rangle_{\mathbf{P}}$. The definition in~\cref{eq:HilbertSchmidt} is consistent with the standard Hilbert--Schmidt norm with orthonormal bases, but the extra inverse Gram matrix $G^{-1}$ accounts for the possibility that  $h_1,\ldots,h_n$ are not orthonormal under the $\langle \cdot,\cdot\rangle_\mathbf{P}$ inner product.
  
\section{Convergence of the DMD Approximation}\label{sec:Convergence}
Now that we have endowed the space of observables with a Hilbert structure, we can consider the convergence of the DMD approximation. Recall that our DMD approximation to $\mathcal{D}_{\Delta t}$ is given by (see~\cref{algo:DKO})
\[
D_m = \Phi_m \Psi_m^\dagger, \qquad (\Phi_m)_{ij} = h_i(\pi_j), \quad (\Psi_m)_{ij} = h_i(\mu_j),\quad 1\leq i \leq n,\quad  1\leq j \leq m\,,
\]
where $h_i(\pi_j)$ and $h_i(\mu_j)$ are evaluated at the collected probability distribution snapshots from the random dynamics with $\mu_j = T_{\Delta t}(\pi_j)$. We show that $D_m$ converges to the best approximation of the Koopman operator $\mathcal{D}_{\Delta t}$ on the finite-dimensional subspace $V_n = {\rm Span}\left\{h_1,\ldots,h_n\right\}$ as $m\rightarrow\infty$, if two conditions hold:
\begin{enumerate}[noitemsep,leftmargin=*]
\item The observables $\left\{h_1,\ldots,h_n\right\}$ are linearly independent basis for $V_n$, and
\item The distributions $\pi_1,\ldots,\pi_m$ are i.i.d.~samples from $\mathbf{P}$.
\end{enumerate} 

We denote the best approximation of the Koopman operator $\mathcal{D}_{\Delta t}$ on the finite-dimensional subspace $V_n = {\rm Span}\left\{h_1,\ldots,h_n\right\}$ by $\mathcal{D}_{\Delta t}^n: V_n\subset H\rightarrow V_n$. This linear finite-rank operator can be represented by an $n\times n$ matrix denoted by
$D_\infty$. The operator $\mathcal{D}_{\Delta t}^n$ (equivalently, the matrix $D_\infty$) minimizes the Hilbert--Schmidt norm of the residual operator given by  $\mathcal{E}:V_n\rightarrow H$ where
\[
\left[\mathcal{E}g\right]\!(\pi) = \left[\mathcal{D}_{\Delta t} g\right]\! (\pi)- \left[\mathcal{D}_{\Delta t}^n g\right]\!(\pi) =  \sum_{i=1}^n g_ih_i(T_{\Delta t}(\pi)) - \sum_{i=1}^n g_i\sum_{k=1}^n (D_\infty)_{ik} h_k(\pi),
\]
for $g(\pi) = \sum_{i=1}^n g_i h_i(\pi)$.  The Hilbert--Schmidt norm of $\mathcal{E}$ is given by $\|\mathcal{E}\|_{\text{HS}} = \| G^{-1} E\|_F$,  where $G_{ij} = \langle h_i,h_j\rangle_{\mathbf{P}}$ and $E_{ij} = \langle \mathcal{E}h_i,\mathcal{E}h_j\rangle_{\mathbf{P}}$, $1\leq i,j \leq n$.

By first-order optimality conditions,  $D_\infty$ is the unique solution to 
\begin{equation} 
Y = D_\infty G, \qquad Y_{ij} = \int h_i(T_{\Delta t}(\pi))h_j(\pi)\,d\mathbf{P}(\pi), \quad G_{ij} = \int h_i(\pi)h_j(\pi)\,d\mathbf{P}(\pi). 
\label{eq:Dinf}
\end{equation} 
Recall that $D_m$ is the unique solution to $\Phi_m\Psi_m^\top = D_m\Psi_m\Psi_m^\top$ (see~\cref{eq:NormalEquationsDKO}),  which we can also write as the solution to the normal equation: $\tfrac{1}{m}\Phi_m\Psi_m^\top = D_m\tfrac{1}{m}\Psi_m\Psi_m^\top$. 

We are ready to show convergence of $D_m$ to $D_\infty$ in the large data limit as $m\rightarrow \infty$.
\begin{theorem}\label{thm:convergence}
Suppose $\left\{h_1,\ldots,h_n\right\}$ is a linearly independent set of DKO observables (see~\cref{sec:DKOobservables}) and the condition number of matrix $G$ defined in~\cref{eq:Dinf} is bounded. 
If for any fixed $m\in \mathbb{N}$, the probability distributions $\pi_1,\ldots,\pi_m$ are i.i.d.~samples from $\mathbf{P}$, then $D_m$ (see~\cref{algo:DKO}) converges to the best matrix approximation, $D_\infty$, to the Koopman operator $\mathcal{D}_{\Delta t}$ on $V_n = {\rm Span}\left\{h_1,\ldots,h_n\right\}$, i.e., 
\[
\lim_{m\rightarrow \infty}\| D_\infty-D_m\|_F = 0,
\]
where the convergence is $\mathbf{P}$-almost surely.
\end{theorem}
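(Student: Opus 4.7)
The plan is to recast $D_m$ and $D_\infty$ as solutions of normal equations of the same algebraic form, show that the sample coefficient matrices converge almost surely to their population counterparts by the strong law of large numbers (SLLN), and then pass to the limit using continuity of matrix inversion. First, I would divide the sample normal equation~\cref{eq:NormalEquationsDKO} by $m$ to obtain $\tfrac{1}{m}\Phi_m\Psi_m^\top = D_m\cdot \tfrac{1}{m}\Psi_m\Psi_m^\top$, placing it in direct correspondence with the population equation $Y = D_\infty G$ from~\cref{eq:Dinf}. The scaled matrix entries are exactly Monte Carlo averages,
\[
\Bigl(\tfrac{1}{m}\Psi_m\Psi_m^\top\Bigr)_{ij} = \tfrac{1}{m}\sum_{k=1}^m h_i(\pi_k)h_j(\pi_k),\qquad \Bigl(\tfrac{1}{m}\Phi_m\Psi_m^\top\Bigr)_{ij} = \tfrac{1}{m}\sum_{k=1}^m h_i(T_{\Delta t}(\pi_k))h_j(\pi_k),
\]
each a sum of i.i.d.\ random variables driven by $\pi_k \sim \mathbf{P}$.

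Second, since every $h_i\in H$ is bounded by definition and $T_{\Delta t}$ maps $\mathcal{P}(M)$ into itself, the summands are uniformly bounded and hence integrable. SLLN therefore yields, entrywise and $\mathbf{P}$-almost surely,
\[
\tfrac{1}{m}\Psi_m\Psi_m^\top \longrightarrow G,\qquad \tfrac{1}{m}\Phi_m\Psi_m^\top \longrightarrow Y,
\]
and the intersection of the $n^2$ corresponding probability-one events preserves the convergence at the matrix level.

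Third, the hypothesis that $G$ has bounded condition number ensures $G$ is invertible. Since matrix inversion is continuous on the open set of invertible matrices, $\tfrac{1}{m}\Psi_m\Psi_m^\top$ is almost surely invertible for all sufficiently large $m$ and its inverse converges to $G^{-1}$; continuity of matrix multiplication then yields
\[
D_m = \Bigl(\tfrac{1}{m}\Phi_m\Psi_m^\top\Bigr)\Bigl(\tfrac{1}{m}\Psi_m\Psi_m^\top\Bigr)^{-1} \longrightarrow Y G^{-1} = D_\infty \qquad \mathbf{P}\text{-a.s.},
\]
which immediately gives the claimed convergence in the Frobenius norm. The main hurdle is the bookkeeping around the almost-sure invertibility of $\tfrac{1}{m}\Psi_m\Psi_m^\top$ for large $m$: on the probability-one event where the Monte Carlo averages converge, one must produce a (random) threshold $m_0(\omega)$ beyond which the empirical Gram matrix is invertible, which follows from continuity of the determinant together with $\det(G)\neq 0$ but merits careful statement.
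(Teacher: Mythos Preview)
Your proposal is correct and follows essentially the same approach as the paper: establish $\tfrac{1}{m}\Psi_m\Psi_m^\top \to G$ and $\tfrac{1}{m}\Phi_m\Psi_m^\top \to Y$ almost surely via the SLLN (using boundedness of the $h_i$), and then pass to the limit in $D_m = \bigl(\tfrac{1}{m}\Phi_m\Psi_m^\top\bigr)\bigl(\tfrac{1}{m}\Psi_m\Psi_m^\top\bigr)^{-1}$. The only cosmetic difference is that the paper makes the continuity-of-inversion step explicit through a Neumann series bound and a triangle-inequality split into two terms, whereas you invoke continuity of $(A,B)\mapsto AB^{-1}$ directly; both routes are equivalent and your version is arguably cleaner.
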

\begin{proof}
By~\cref{eq:NormalEquationsDKO} and~\cref{eq:Dinf},  we find that 
 \[
 \begin{aligned}
 \| D_\infty-D_m\|_F & = \| YG^{-1} - \tfrac{1}{m}\Phi_m\Psi_m^\top(\tfrac{1}{m}\Psi_m\Psi_m^\top)^{-1}\|_F\\
 &\leq \underbrace{\| YG^{-1} - Y(\tfrac{1}{m}\Psi_m\Psi_m^\top)^{-1}\|_F}_{I_1} \\& \qquad\qquad+ \underbrace{\| Y(\tfrac{1}{m}\Psi_m\Psi_m^\top)^{-1} - \tfrac{1}{m}\Phi_m\Psi_m^\top(\tfrac{1}{m}\Psi_m\Psi_m^\top)^{-1}\|_F}_{I_2},
 \end{aligned} 
 \]
 where the last inequality comes from the triangle inequality.   
 
To show that $I_1$ goes to $0$ in the large data limit,  first note that 
\begin{equation}\label{eq:I1}
 I_1 \leq \|Y\|_F \| G^{-1} - (\tfrac{1}{m}\Psi_m\Psi_m^\top)^{-1}\|_F.   
\end{equation}
Since $\lim_{m\rightarrow \infty} \tfrac{1}{m}\Psi_m\Psi_m^\top = G$ and $\|G^{-1}\|_F$ is bounded due to the fact that the singular values of $G^{-1}$ are bounded from above, we know that for a sufficiently large $m$,  we have 
\[
\|G^{-1}E_m\|_F \leq \|G^{-1}\|_F\|E_m\|_F < 1, \qquad E_m = G - \tfrac{1}{m}\Psi_m\Psi_m^\top. 
\]
Hence, we can use a Neumann series expansion as follows: 
\[
\begin{aligned}
\| G^{-1} - (\tfrac{1}{m}\Psi_m\Psi_m^\top)^{-1}\|_F &= \| G^{-1} - (G-E_m)^{-1}\|_F \\
&= \| G^{-1} - (I-G^{-1}E_m)^{-1}G^{-1}\|_F \\
&= \left\|G^{-1} - \sum_{s=0}^\infty (G^{-1}E_m)^sG^{-1}\right\|_F \\
&= \left\|\sum_{s=1}^\infty (G^{-1}E_m)^sG^{-1}\right\|_F \\
&\leq \|G^{-1}\|_F^2\|E_m\|_F + \mathcal{O}\left(\|E_m\|_F^2\right).
\end{aligned} 
\]
Thus,  $\| G^{-1} - (\tfrac{1}{m}\Psi_m\Psi_m^\top)^{-1}\|_F\rightarrow 0$ a.s.~as $m\rightarrow \infty$.  Since $\|Y\|_F$ in~\cref{eq:I1} is bounded, we find that $I_1$ goes to $0$ a.s.~as $m\rightarrow \infty$.   

To show that $I_2$ goes to $0$ as $m\rightarrow\infty$,  note that 
 \[
I_2 \leq  \| Y - \tfrac{1}{m}\Phi_m\Psi_m^\top\|_F\|(\tfrac{1}{m}\Psi_m\Psi_m^\top)^{-1}\|_F.
 \]
Due to the triangle inequality, 
\begin{align*}
\|(\tfrac{1}{m}\Psi_m\Psi_m^\top)^{-1}\|_F & \leq \|G^{-1}\|_F + \| G^{-1} - (\tfrac{1}{m}\Psi_m\Psi_m^\top)^{-1}\|_F \\
&\leq   \|G^{-1}\|_F + \|G^{-1}\|_F^2\|E_m\|_F + \mathcal{O}\left(\|E_m\|_F^2\right).
\end{align*}
Therefore, we find that $\|(\tfrac{1}{m}\Psi_m\Psi_m^\top)^{-1}\|_F$ is bounded for all $m$, almost surely. On the other hand,
 \[
\left( \tfrac{1}{m}\Phi_m\Psi_m^\top\right)_{ij}\! =\! \frac{1}{m}\sum_{k=1}^m h_i(\mu_k) h_j(\pi_k) \xrightarrow{\text{a.s.}} \int h_i(T_{\Delta t}(\pi))h_j(\pi)\,d\mathbf{P}(\pi) = Y_{ij},
 \]
for $1\leq i,j\leq n$. Thus,  $\| Y - \tfrac{1}{m}\Phi_m\Psi_m^\top\|_F\rightarrow 0$ a.s.~with $m\rightarrow \infty$. We then find that $I_2\rightarrow 0$ as $m\rightarrow\infty$ almost surely. This finishes the proof. 
\end{proof}
While~\cref{thm:convergence} looks like an asymptotic result,  one could adapt the proof to show that each entry of $D_m$ converges to $D_\infty$ at the Monte Carlo rate of $1/\sqrt{m}$.  However,  more assumptions are needed on the dynamics and the DKO observables if one wants to derive an explicit error bound on $\|D_\infty - D_m\|_F$. 

\section{Numerical Examples}\label{sec:numerics}
 We now numerically demonstrate the application of DKO with several examples.

\subsection{Random Rotations on a Circle}\label{sec:numerics_rotations}
We begin by looking at the RDS introduced in~\cref{sec:random_dynamical_system}, where $M = \mathbb{S}^1$, $\Omega = \{(\omega_k)_{k = 1}^\infty, \omega_k \in \left[-\frac{1}{2}, \frac{1}{2}\right]\}$, $\mathcal{F}$ is the $\sigma$-algebra generated by cylinder sets, and  $p$ is the Bernoulli measure on $\Omega$ where $\{\omega_k\}$ are i.i.d.~samples from the uniform distribution over $\left[-\frac{1}{2}, \frac{1}{2}\right]$.  Finally, we set $\nu = 1/2$.

We consider both the SKO and DKO over one timestep, denoted by $\mathcal{S}_1$ and $\mathcal{D}_1$, respectively.   For this RDS, the eigenvalues and eigenfunctions of $\mathcal{S}_1$ and $\mathcal{D}_1$ restricted to $H_1$ can be calculated analytically.   Moreover,  $\mathcal{S}_1$ and $\mathcal{D}_1$ restricted to $H_1$ share the same eigenvalues,  and their eigenfunctions are related.  This is because if $\hat{g}$ is an eigenfunction of $\mathcal{S}_1$,  then $g(\pi) = \mathbb{E}_{X\sim \pi} [\hat{g}(X)]$ is an eigenfunction of $\mathcal{D}_1$ restricted to $H_1$, with the same eigenvalue.   It can be shown~\cite{stochastic_koopman} that all the eigenfunctions of $\mathcal{S}_1$ take the form 
\[
\hat{g}_k(x) = e^{\mathrm{i}kx}, \quad k \in \mathbb{Z},
\] 
while the corresponding eigenfunctions of $\mathcal{D}_1$ restricted to $H_1$ take the form
\[
g_k(\pi) = \mathbb{E}_{X\sim\pi}[\hat{g}_k(X)],\quad k \in \mathbb{Z}.
\]
The corresponding eigenvalues are $\lambda_k = \frac{\mathrm{i} - \mathrm{i} e^{\mathrm{i} k}}{k}$ for $k\neq 0$ and $\lambda_0 = 1$.

In the following, we numerically approximate these eigenvalues and eigenfunctions using DMD (see~\cref{algo:SKO,algo:DKO}) and compare their accuracy. 

\subsubsection{Eigenvalues of SKO and DKO}
For SKO, we collect a single long trajectory $x_0,x_1,\ldots,x_m\in \mathbb{R}$ from the RDS starting from $x_0 = 0$. We choose $m= 20,\!000$ so that the SKO has access to $N = 20,\!000$ trajectory samples. We select observables $\{\hat{h}_i\}_{i = 1}^{n}$ with $n = 100$, where each $\hat{h}_i(x)$ is an indicator function given by
$$
\hat{h}_i(x) = \chi_{\Big[ \frac{2\pi (i - 1)}{n}, \frac{2\pi i}{n}\Big)} = \begin{cases}
    1, \text{ if } x\in \Big[\frac{2\pi(i - 1)}{n}, \frac{2\pi i}{n}\Big),\\
    0 , \text{ otherwise},
\end{cases}\quad  1\leq i \leq n \,.
$$ 
\Cref{algo:SKO} returns an $n \times n$ matrix approximation to the SKO. 

For DKO, we take $V_n = \text{Span} \{h_1,\ldots,h_n\}$ with $n = 100$, where 
$$
h_i(\pi) = \mathbb{E}_{X \sim \pi}\! \left[   \hat{h}_i(X)\right]\,,\quad 1\leq i \leq n \,.
$$ 
For the measures $\{\pi_j\}_{j=1}^{m}$ needed for~\cref{algo:DKO}, we select conditional distributions on sub-arcs of the uniform distribution over a circle, i.e.,
$$
\pi_j \in \mathcal{P}(\mathbb{S}^1), \qquad \frac{d\pi_j}{dx}(x)=\begin{cases}
     \frac{m}{2\pi}, & x\in \Big[\frac{2\pi(j-1)}{m}, \frac{2\pi j}{m}\Big),\\
     0, & \text{ otherwise},
 \end{cases}\qquad  1\leq j \leq m \,.
$$
We sample $\pi_j$ using $K$ samples to form an empirical distribution. The same applies to each $\mu_j = T_{1}( \pi_j)$ where  $T_{1}$ is the transfer operator for one timestep. Using~\cref{algo:DKO}, we obtain an $n \times n$ matrix approximation of the DKO. For DKO, we use $m = 20$ empirical distributions, each of which is represented by $K = 1000$ samples. Hence, DKO has access to $N = 20\times 1000 = 20,\!000$ trajectory samples. As a result, both DKO and SKO have access to the same number of data points from the RDS (but not the exact same data).
 
We compute the $10$ eigenvalues closest to $\lambda_1,\ldots,\lambda_{10}$ and the eigenfunctions of $S_m$ and $D_m$ associated with $\lambda_1$ and $\lambda_3$ (see~\cref{fig:circle_rotation}).  Unsurprisingly, given the relationship between the SKO and DKO (see~\cref{sec:dko_sko_connection}) and the fact they were given the same amount of data, $S_m$ and $D_m$ return eigenvalues and eigenvectors with very similar accuracy.  
\begin{figure}
    \centering
    \begin{minipage}{.49\textwidth}
    \includegraphics[width=\linewidth]{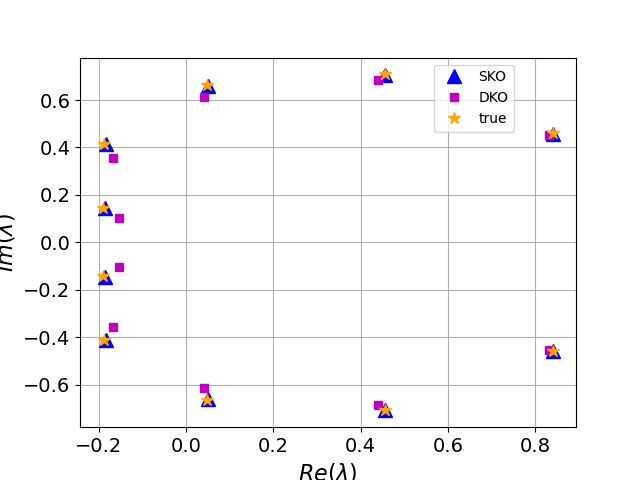}
    \end{minipage} 
        \begin{minipage}{.49\textwidth}
    \includegraphics[width=\linewidth]{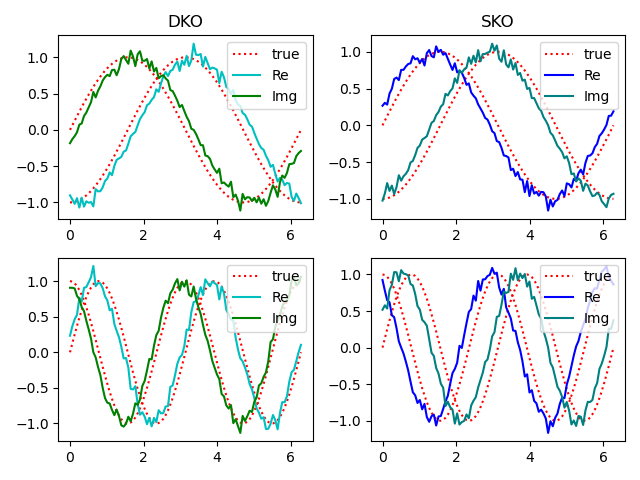}
    \end{minipage}
    \caption{Left: Ten eigenvalues of $S_m$ (blue triangles) and the $D_m$ (red rectangle) closest to $\lambda_1,\ldots,\lambda_{10}$, and compared against the values $\lambda_k = (\mathrm{i} - \mathrm{i}e^{\mathrm{i}k})/k$ (yellow star). Right: The eigenfunction of $S_m$ and $D_m$ associated with $\lambda_1$ (top row) and the eigenfunction  associated with $\lambda_3$ (bottom row), compared against the analytically known functions.}
    \label{fig:circle_rotation}
\end{figure}

\subsubsection{Sensitivity Analysis on Data Amount and Noise Level}
Next, we analyze the performance of SKO (see~\cref{algo:SKO}) and DKO (see~\cref{algo:DKO}) as the amount and the quality of the data changes. 

First, we vary the trajectory data that both algorithms can access. In the prior test, we used $N = 20,\!000$ trajectory samples. Now, following a quadratic progression, we gradually increase $N$ from $1$ to $10,\!000$. Specifically, we construct $S_m$ using a single trajectory with $N = 1, 2^2, \dots, 100^2$. In the meantime, we construct $D_m$ on trajectory data with $m=\sqrt{N}$ distinct empirical distributions, each represented by $K=\sqrt{N}$ samples. We keep the number of observables fixed as before, so $n = 100$. 

To evaluate the performance, we compute the mean-squared error (MSE) between the computed eigenvalues $\{\lambda^{\text{approx}}_k\}$ and the true eigenvalues  $\{\lambda^{\text{true}}_k\}$ of the Koopman operators, which is given by
$$
 \frac{1}{n}\sum_{k = 1}^n \left|\lambda^{\text{true}}_k - \lambda^{\text{approx}}_k\right|^2\,.
$$ 
As we often find, under the same amount of data, both $S_m$ and $D_m$ return eigenvalue approximations that are comparable (see~\cref{fig:errors_circle} (left)).
 
We also test the robustness of~\cref{algo:SKO,algo:DKO} when the trajectory data is noisy. At each step, we assume that the rotation angle is polluted by additive noise following a normal distribution $\mathcal{N}(0,\sigma^2)$. The resulting noisy trajectory $x_{n + 1}$ is obtained by computing $$x_{n + 1}^\text{noise} = (x_n + \nu + \omega_n + \epsilon_n) \ \text{mod}\ 2\pi, \qquad n\geq 0,
 $$
 where $\epsilon_n \sim \mathcal{N}(0, \sigma^2)$, with the standard deviation $\sigma$ denoting the noise level. We vary the noise level $\sigma$ to examine the sensitivity of the eigenvalues of $S_m$ and $D_m$. We compute the MSE between the true eigenvalues and the approximations obtained by~\cref{algo:SKO,algo:DKO}  for $0\leq \sigma \leq 1$ (see~\cref{fig:errors_circle} (right)). We find that the SKO and DKO approaches have comparable robustness to noisy data. 

\begin{figure}
    \centering
    \begin{minipage}{.49\textwidth} 
    \includegraphics[width=\linewidth]{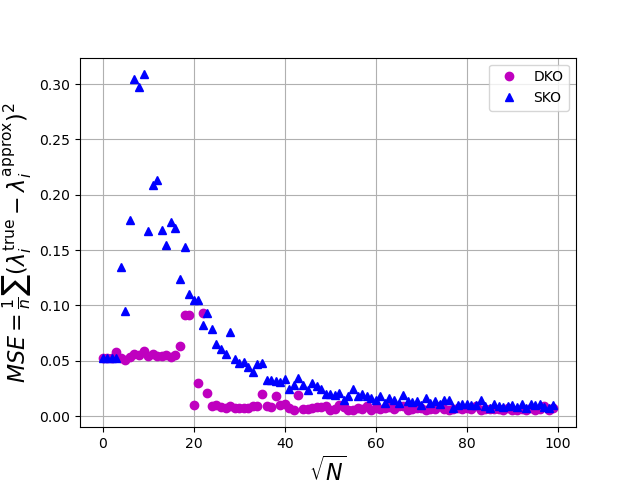}
    \end{minipage} 
        \begin{minipage}{.49\textwidth} 
    \includegraphics[width=\linewidth]{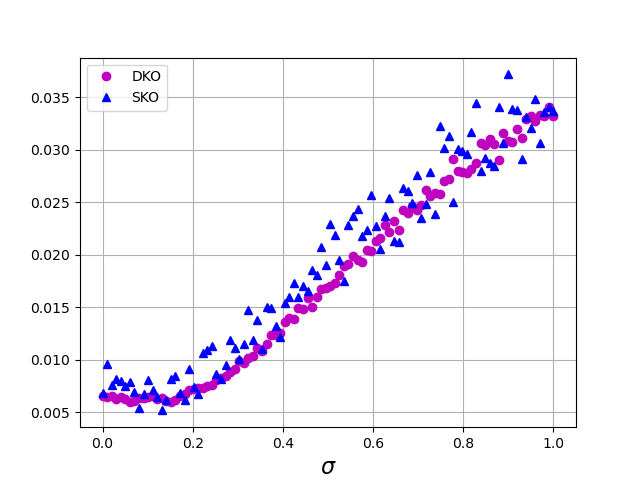}
    \end{minipage}
    \caption{The MSE error between the true and the computed eigenvalues of $S_m$ and $D_m$ as the number of data points increases (left) and as the noise level in the trajectory data increases (right). }
    \label{fig:errors_circle}
\end{figure}

\subsection{Stochastic Differential Equation}\label{sec:sde}
In this subsection, we consider two SDEs with time-independent coefficients (see~\cref{sec:random_dynamical_system}).
In both SDEs, $a(x)$ represents a drift, and $b(x)$ is a diffusion coefficient. We test both a state-independent noise scenario (see~\cref{subsec:noise_independent}), i.e., $b(x)$ is a constant, and a state-dependent noise example (see~\cref{subsec:noise_dependent}).

We select SKO observables as $9$ Gaussian functions centered at equally-spaced points on $[-2,2]$, i.e., 
$$
 \hat{h}_i(x) = e^{-( x - c_i)^2/2}\,,\qquad h_i(\pi) = \int \hat{h}(x) d\pi(x)\,,\quad  1\leq i\leq 9\,,
$$
 where $\{c_i\}_{i=1}^9 = \{-2,-1.5,\ldots, 1.5, 2\}$. We select the DKO observables as the $9$ corresponding functions on probability distributions, denoted by $h_1,\ldots,h_9$. 

In the state-independent and dependent scenarios for the diffusion coefficient, we approximate $\mathcal{D}_{\Delta t}$ with $\Delta t=  0.1$ by constructing the matrix $D_m$. We first draw $K$ samples $\{x^k_0\}_{k = 1}^{K} $ following the standard normal distribution as our initial distribution for the SDE. The ensemble of points serves as initial conditions of $K$ i.i.d.~trajectories, and we obtain points $\{x^k_{j \Delta t}\}_{j = 1}^{m}$ up to a total time $T$ with time interval $\Delta t$ for each $k=1,\ldots, K$. Hence, we obtain $m = T/\Delta t$ empirical measures, each represented with $K$ samples:
\begin{equation}\label{eq:empirical_measure}
    \pi_j =  \frac{1}{K}\sum_{k = 1}^K\delta_{x^k_{(j - 1)\Delta t}}\,, \quad \mu_j = \frac{1}{K} \sum_{k = 1}^K \delta_{x^k_{j\Delta t}}\,,\quad  1\leq j \leq m\,.
\end{equation}

To evaluate the accuracy of the future-time prediction, we compute the reference predicted values for each observable of choice as follows.
We  first approximate the functions 
\begin{equation}\label{eq:pred}
\left[\mathcal{D}_{t}h_i\right]\!(\delta_x) = \mathbb{E}[\hat{h}_i(X_t)|X_0 = x], \, \quad 1\leq i \leq n\,,
\end{equation} 
using Monte Carlo integration for each equidistant point $x$ between $[-2, 2]$. We consider $101$ equidistant points between $[-2, 2]$ and various time points $t = \ell\Delta t$, where $ \ell = 1,\ldots, \lfloor \frac{T_\text{pred}}{\Delta t} \rfloor$ with $T_\text{pred}$ being the maximum prediction time. Specifically, for each equidistant point $x$ between $[-2, 2]$ serving as the initial condition, we compute $N_\text{sample} = 100$  i.i.d.~trajectories following the SDE over the time interval $[0, T_\text{pred}]$ using the Euler--Maruyama method. These trajectories are used to approximate the expectation in~\cref{eq:pred}. Note that the procedure above is not feasible in most realistic situations, and we do it here so that we have true value to evaluate the quality of the predictions produced by DKO.

Once we obtain the matrix approximation $D_m$ for the DKO, we can compute the approximations to $[\mathcal{D}_{\Delta t} {h}_i](\pi)$ for a fixed distribution $\pi$ as follows:
\begin{equation*}
    [\mathcal{D}_{\Delta t}h_i](\pi) \approx \sum_{ k = 1}^n (D_m)_{ik} h_k(\pi).
\end{equation*}
The matrix $D_m^\ell = \underbrace{D_m D_m \ldots D_m}_{\text{$\ell$ times}}$ can be used to do prediction after $\ell\Delta t$ time.

We denote the predicted value of $[\mathcal{D}_{\Delta t}h_i](\delta_x)$ produced by~\cref{algo:DKO} at time $t$ and spatial location $x$ as $f_i(t,x)$.
We compare the numerical prediction using the DKO framework with the ground truth prediction value obtained from direct simulation by computing the Mean-Square Error (MSE):
\begin{align*}
\text{MSE}(t) = \frac{1}{n}\sum_{i = 1}^n \int_{-2}^2 |[\mathcal{D}_t{h}_i](\delta_x) - f_i(t,x) |^2 dx\,.
\end{align*}
We approximate the MSE above based on the trapezoidal rule evaluated at the grid points specified above.

\subsubsection{State-independent Noise}\label{subsec:noise_independent}
We first consider a simple state-independent SDE with drift and diffusion coefficients
$$
a(x) = -x, \qquad b(x) = \sqrt{2}.
$$ 
We consider training time $T = 6$, prediction time $T_\text{pred} = 10$, $K = 100$ and $m = T/\Delta t = 60$.
After running the above procedure, we obtain the results shown in~\cref{fig:SDE} (left). Since each run of~\cref{algo:DKO} produces slightly different datasets due to randomness of the dynamics, we repeat the experiment $100$ times, and compute the mean and standard deviation of MSE. The error bars shown in~\cref{fig:SDE} (left) represent the sample standard deviation divided by $10 = \sqrt{100}$, which is the effective standard deviation of the mean value.

\begin{figure}
\centering
\begin{minipage}{.49\textwidth}
\includegraphics[width=\linewidth]{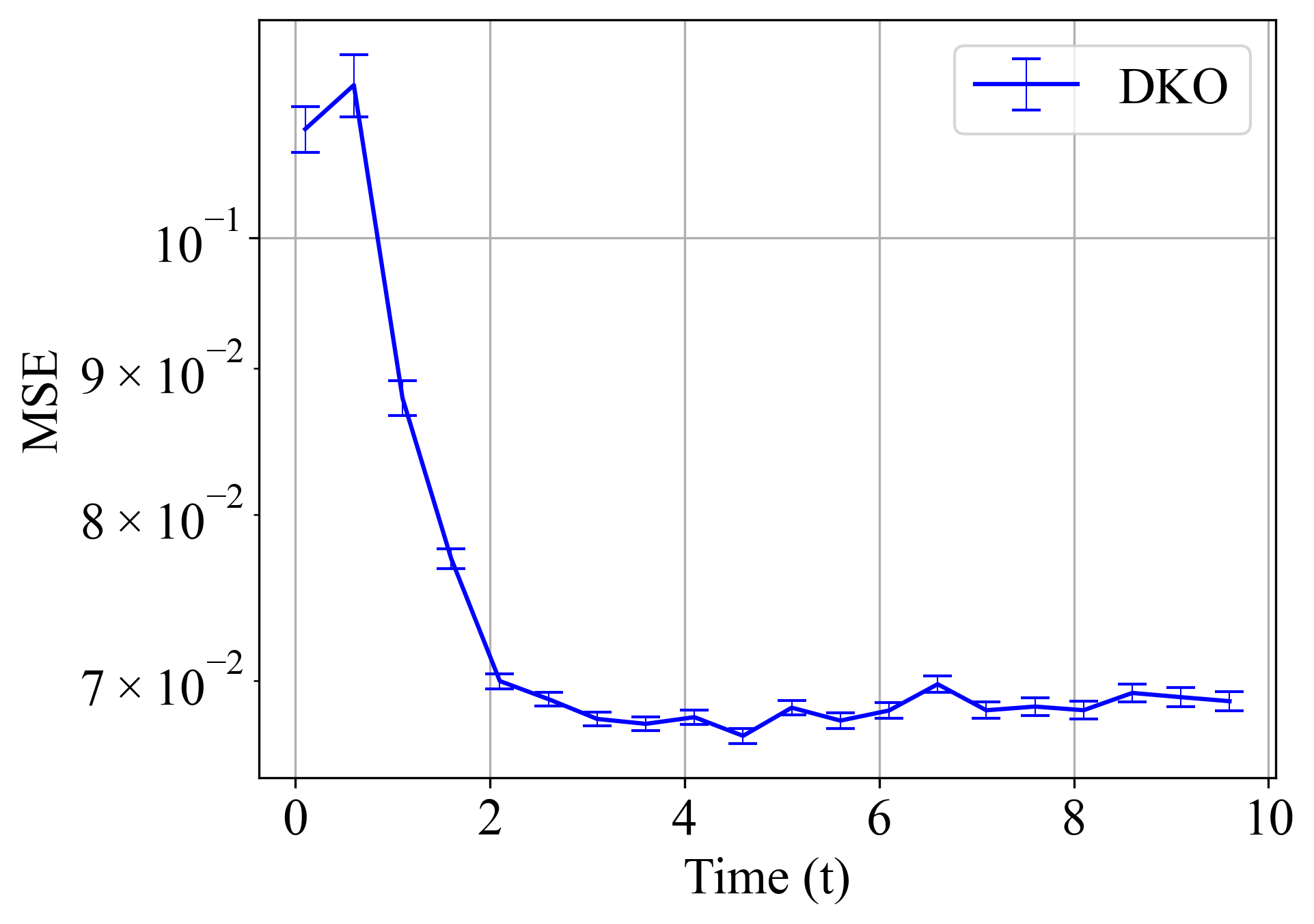}
\end{minipage}
\begin{minipage}{.49\textwidth}
\includegraphics[width=\linewidth]{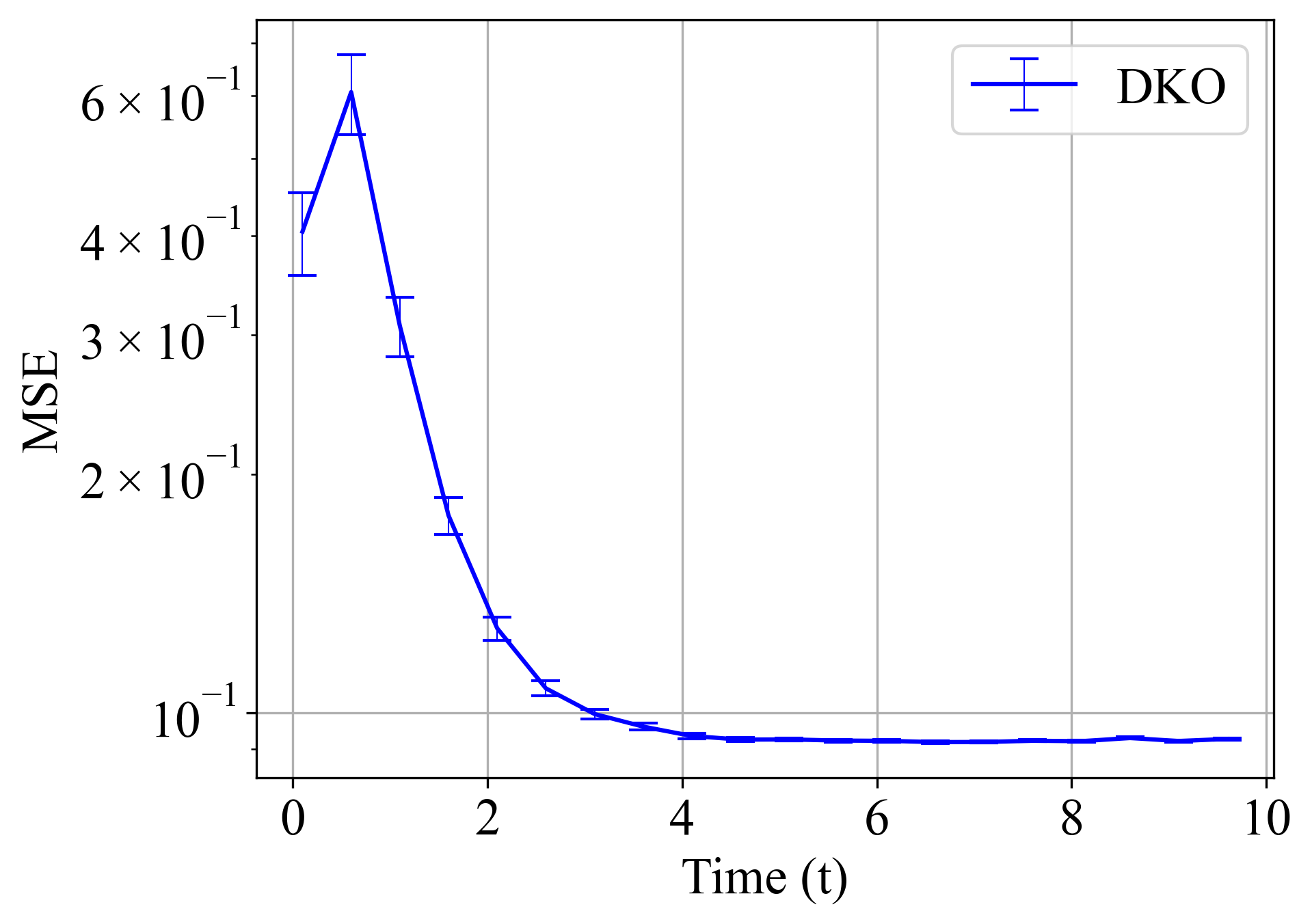}
\end{minipage}
\caption{DKO prediction for state-independent noise (left) and state-dependent noise (right) when the SDE drives the randomized dynamical system. }\label{fig:SDE}
\end{figure}

\subsubsection{State-dependent Noise}\label{subsec:noise_dependent}
Next, we consider the following SDE with a state-dependent diffusion coefficient given by 
$$
a(x) =  -\sin( x) \,, \qquad b(x) = \exp\left( - \frac{1}{2} (x-1)^2 \right)\,.
$$ 
We again consider training time $T = 6$, prediction time $T_\text{pred} = 10$, $K = 100$ and $m = T/\Delta t = 60$. As in~\cref{subsec:noise_independent}, we repeat the experiments $100$ times and plot the mean and standard deviation of the MSE in~\cref{fig:SDE} (right). As in~\cref{subsec:noise_independent}, the mean is normalized by $10 = \sqrt{100}$. 


\subsubsection{Variance Computations}\label{subsec:variance}
One of the benefits of the DKO framework over the SKO framework is that the DKO enables us to predict the variance of an observable of the random state along the trajectory of the SDE.   As before, let $\hat{h}_i(x) = e^{-(x - c_i)^2/2}$ where $\{c_i\}_{i = 1}^9 = \{-2, -1.5, \dots, 1.5, 2\}$ and construct the DKO observables $\{p_{ij}\}$ and $\{q_{ij}\}$ given by
\begin{align*}
p_{ij}(\pi) &=\int \hat{h}_i(x)\hat{h}_j(x)\,d\pi(x),\\
q_{ij}(\pi) &= \left(\int \hat{h}_i(x)\,d\pi(x)\right)\!\!\left(\int \hat{h}_j(x)\,d\pi(x) \right)\,,
\end{align*}
for $1\leq i <j \leq 9$. Then we can consider the DKO acting on the following finite-dimensional linear space: 
\begin{equation}
V = \mathrm{span}\!\left\{\alpha_{ij}p_{ij}+\beta_{ij}q_{ij}: \alpha_{ij},\beta_{ij}\in\mathbb{R}, 1 \leq i < j \leq 9  \right\},\label{eq:V}
\end{equation} 
where all the $p_{ij}$'s and $q_{ij}$'s, in total $90$ observables, serve as the basis function for the space $V$. 
The goal of this test is to approximate the variance of all possible linear combination of $\{\hat{h}_i\}_{i=1}^9$ along the trajectory.

Computationally, we repeat the experiments described at the beginning of~\cref{sec:sde} to compute the best approximation to $\mathcal{D}_{\Delta t}$ when restricted to the subspace $V$ given in~\cref{eq:V}. Our training data consists of $K = 100$ i.i.d.~points sampled from a standard normal distribution. Their corresponding trajectories are denoted by $\{x_{j\Delta t}^k\}_{j = 1}^{m}$, for each $ k = 1,\ldots, K$, for a total time of $T = m \Delta t $ with the timestep $\Delta t = 0.1$ and $m = 60$.  We build empirical measures  $\pi_j$ and $\mu_j$ for $1 \leq j\leq 60$ using~\cref{eq:empirical_measure}. Following~\Cref{algo:DKO}, one can construct a $90\times 90$ matrix $D_m$ and apply it to a vector of coefficients $e\in \mathbb{R}^{90}$, where 
 its $k$th element $e_k$ corresponds to the $k$th basis vector. Here, the $k$th basis vector is $p_{ij}$ such that $9i + j = k$ if $k \leq 45 $, and is $q_{ij}$  such that $9i + j + 45 = k$ if $k > 45$.

Let $g_i(\pi) = {\rm Var}_{X \sim \pi} [\hat{h}_i(X)]$ for $1 \leq i\leq 9$. We assess the quality of our approximation by estimating the function $[\mathcal{D}_{t}g_i](\delta_x)$ where $x\in [-2, 2]$ and for times $t = \ell\Delta t$ with $\Delta t =0.1$ and $\ell  = 1, \dots, 100$, where
$$ 
[\mathcal{D}_{t} g_i](\delta_x) = {\rm Var}\left[\hat{h}_i(X_t)|X_0 = x\right], \, \quad 1\leq i \leq 9\,.
$$
Hence, the DKO prediction of $[\mathcal{D}_{t}g_i](\delta_x) $, denoted by $f_i^{\text{var}} (t, x)$, amounts to repeatedly multiplying the matrix approximation for $\mathcal{D}_{t}$ with the vector $w_i \in \mathbb{R}^{90}$,  whose entries are
\[
(w_i)_j = \begin{cases}
    1, & \text{ if } j = i, \\
    -1, & \text{ if } j = i + 45,\\
    0, & \text{ otherwise},
\end{cases}
\qquad 1 \leq i \leq 9, \quad 1 \leq j \leq 90.
\]
Denote the prediction for different times $t$ and spatial locations $x$ by $f_i^{\text{var}}(t, x)$. To evaluate the accuracy, we compute the MSE given by
$$\text{MSE}_{\text{var}} (t) = \frac{1}{9}\sum_{i = 1}^9 \int_{-2}^2 \left|\left[\mathcal{D}_t g_i\right]\!(\delta_x) - f_i^{\text{var}} (t, x) \right|^2 dx.$$
\Cref{fig:SDE_var2} shows the mean and standard deviation scaled by $10 = \sqrt{100}$ of $\text{MSE}_\text{var}$ computed over $100$ independent experiments, for both the state-independent noise example (see~\cref{subsec:noise_independent}) and the state-dependent noise case (see~\cref{subsec:noise_dependent}).

\begin{figure}
\centering
\begin{minipage}{.49\textwidth}
\includegraphics[width=\linewidth]{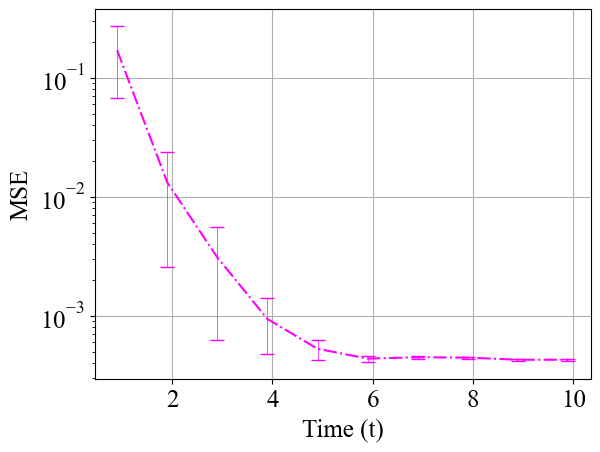}
\end{minipage} 
\begin{minipage}{.49\textwidth}
\includegraphics[width=\linewidth]{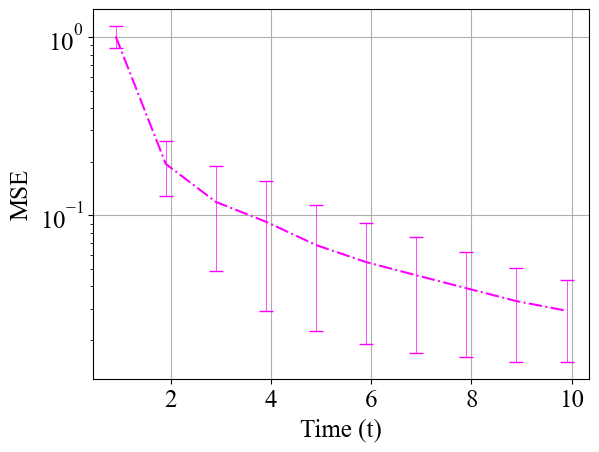}
\end{minipage} 
\caption{Variance prediction for the SDE with state-independent noise  (left) and the SDE with state-dependent noise (right). The test SDEs are shown in~\cref{subsec:noise_independent,subsec:noise_dependent}) and experimental details are discussed in~\cref{subsec:variance}.}\label{fig:SDE_var2}
\end{figure}

\subsection{Dust Plume Data}\label{subsec:Dust}

Dust plume data provide a high-temporal-resolution, event-driven snapshot of dust storm behavior. We use the DustSCAN2022 dataset \cite{plumesdustscan}, which contains hourly dust observations from SEVIRI on Meteosat-8, capturing 8718-time points over the year 2022. Following the methodology outlined in~\cite{plumesdustscan}, we compute the Pink Dust Index (PDI), a normalized measure of dust density derived from the RGB color distance to magenta (see~\cref{fig:dust}, left). Given the lack of trajectory information of individual dust particles, this is an application where SKO techniques cannot be applied. 

To prepare the data for analysis, we coarse-grain the spatial resolution using a structured grid of $50 \times 50$ patches and convert each hourly frame into a vector of average PDI values over these subdomains. This yields a spatiotemporal dataset suitable for DKO. Using the first 28 day's worth of data, corresponding to $24\times 28 = 672$ snapshots, we compute the DKO with the observables being the $50 \times 50$ averaged patches. The DKO eigenvalues (see~\cref{fig:dust}, right) lie within the unit circle, with a few unit-modulus modes corresponding to slowly evolving structures in the dust field.

We assess the forecasting capability of the DKO model by predicting the dust field up to five hours beyond the training window. \Cref{fig:dust_pred} shows snapshots of the actual and predicted fields over the first five hours into the second month (where data exists, but we did not use it to compute the DKO), revealing that the model captures the dominant structures and trajectories of dust plumes during this time. 

\begin{figure}
    \centering
    \begin{minipage}{.55\textwidth}\includegraphics[width=\linewidth]{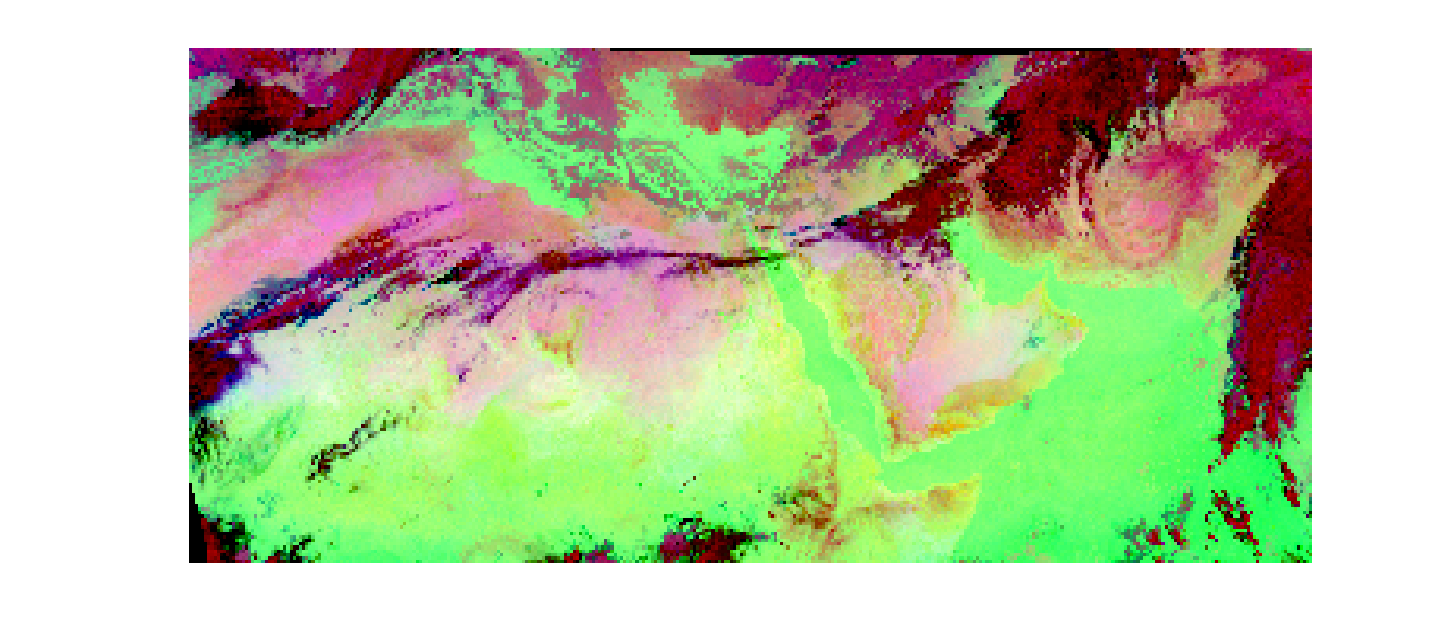}
    \end{minipage}
    \begin{minipage}{.44\textwidth}
    \includegraphics[width=\linewidth]{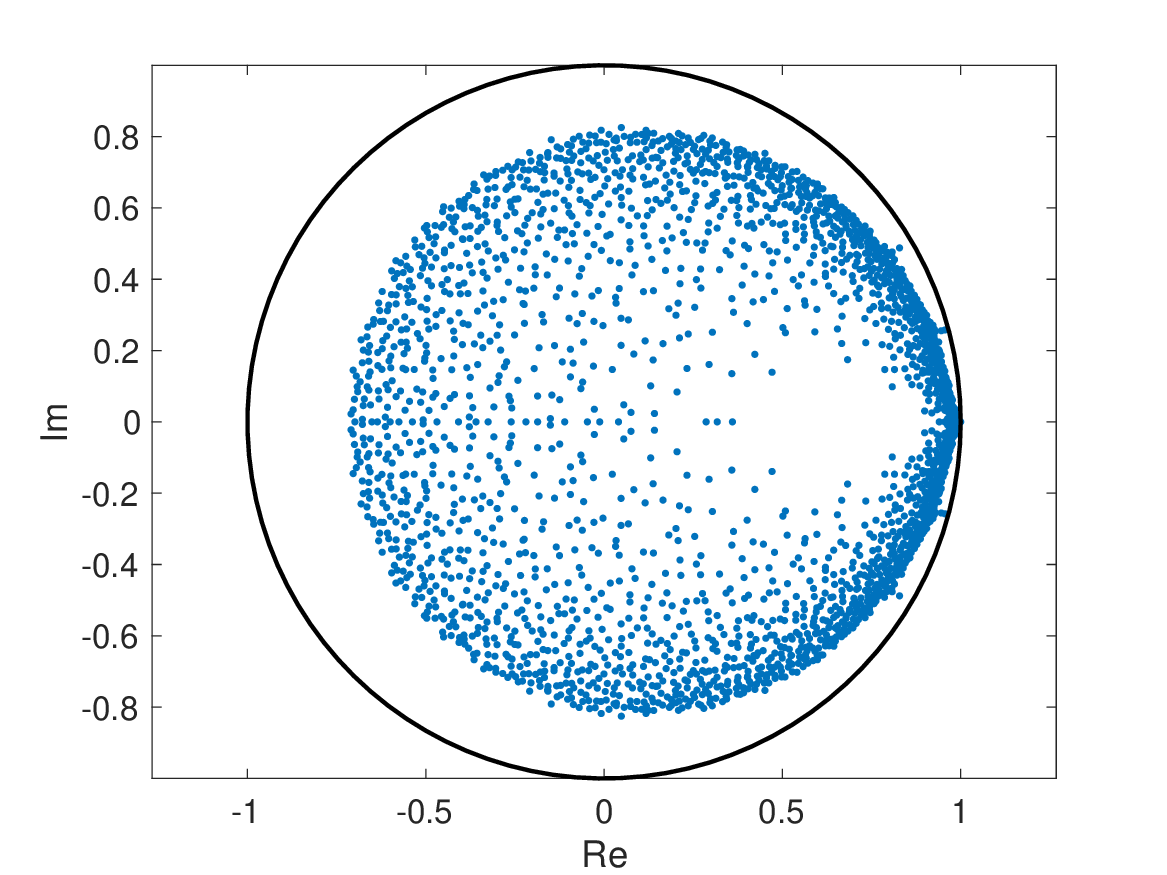}
    \end{minipage}
    \caption{Left: One snapshot of the DustSCAN2022 data, showing dust clouds. Right: DKO eigenvalues (blue dots) along with the unit circle (black line).}
    \label{fig:dust}
\end{figure}

\begin{figure}
    \centering
    \begin{minipage}{.195\textwidth}
        \begin{overpic}[width=\linewidth]{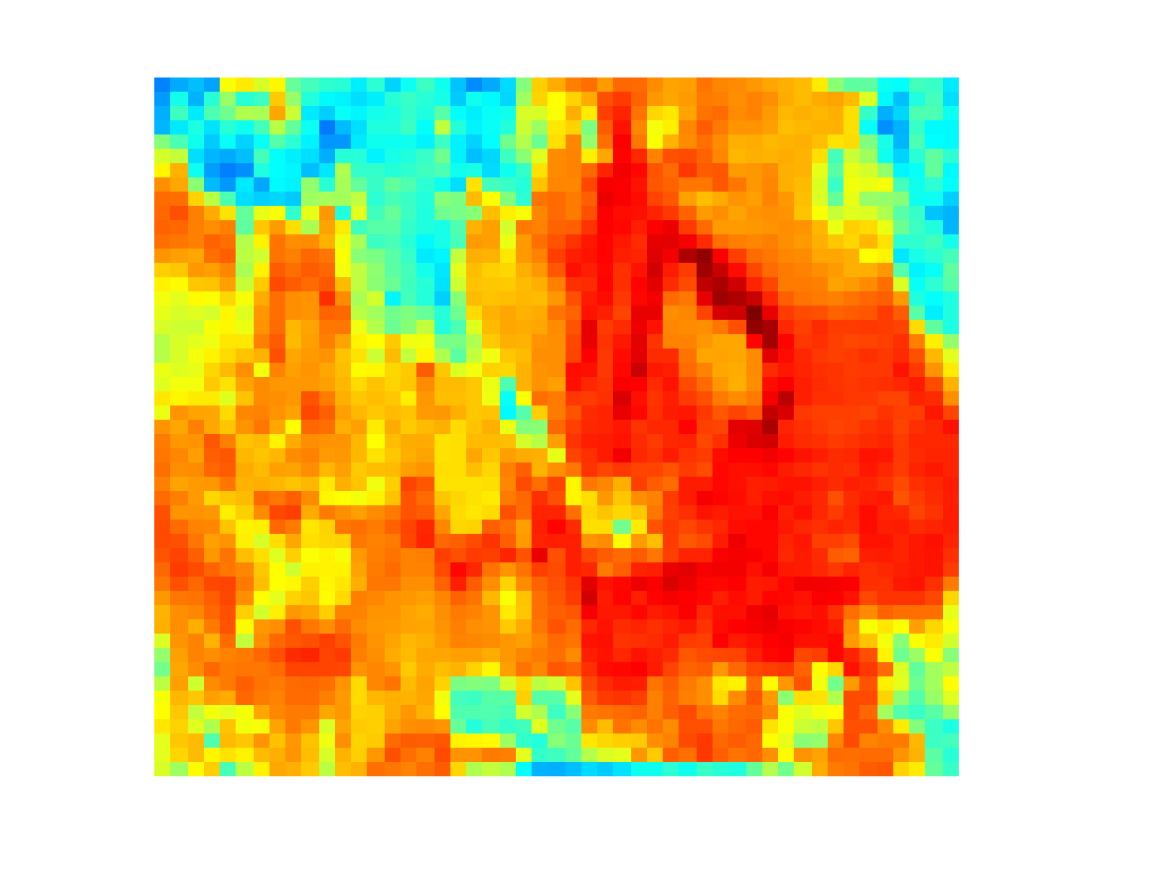}
        \put(0,23) {\rotatebox{90}{Actual}}
        \put(35,70) {hr 1}
        \end{overpic} 
    \end{minipage}\hspace{-0.5mm}
    \begin{minipage}{.195\textwidth}
        \begin{overpic}[width=\linewidth]{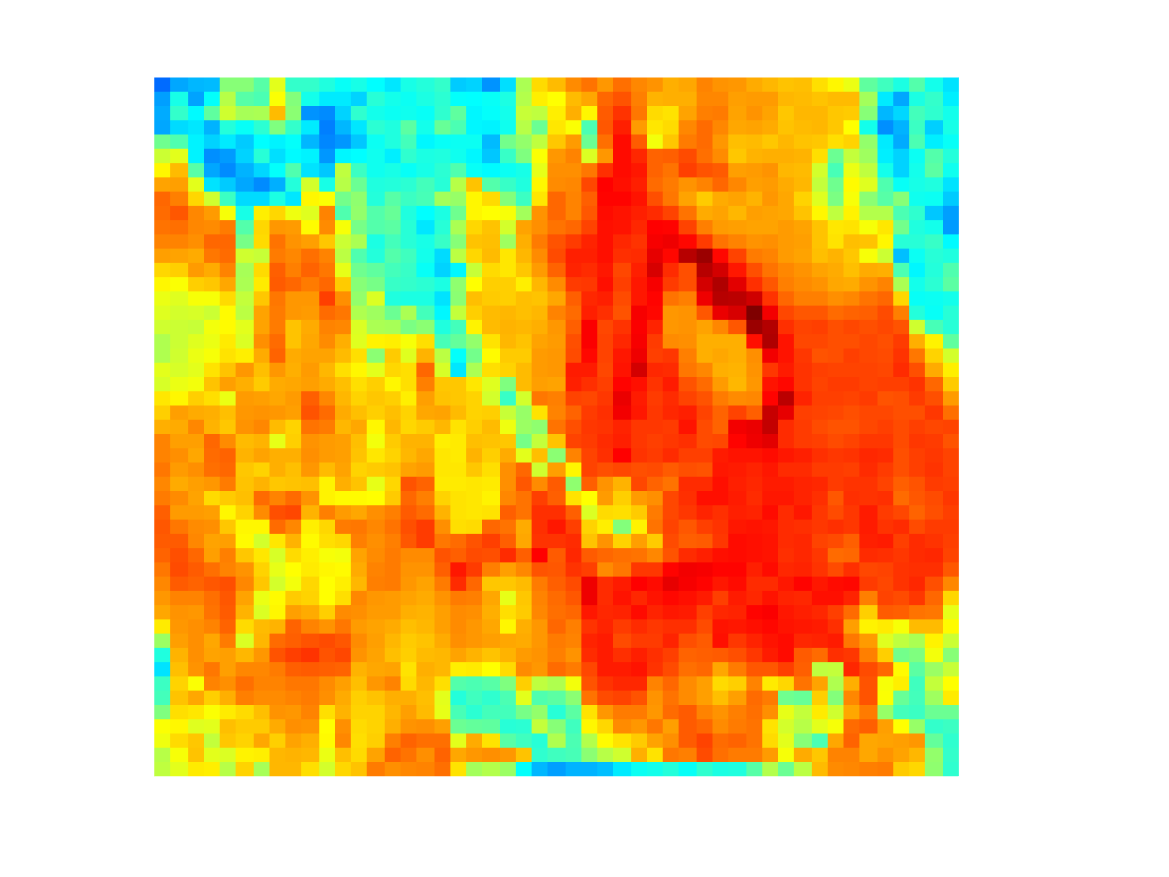}
        \put(35,70) {hr 2}
        \end{overpic}
    \end{minipage}\hspace{-0.5mm}
    \begin{minipage}{.195\textwidth}
        \begin{overpic}[width=\linewidth]{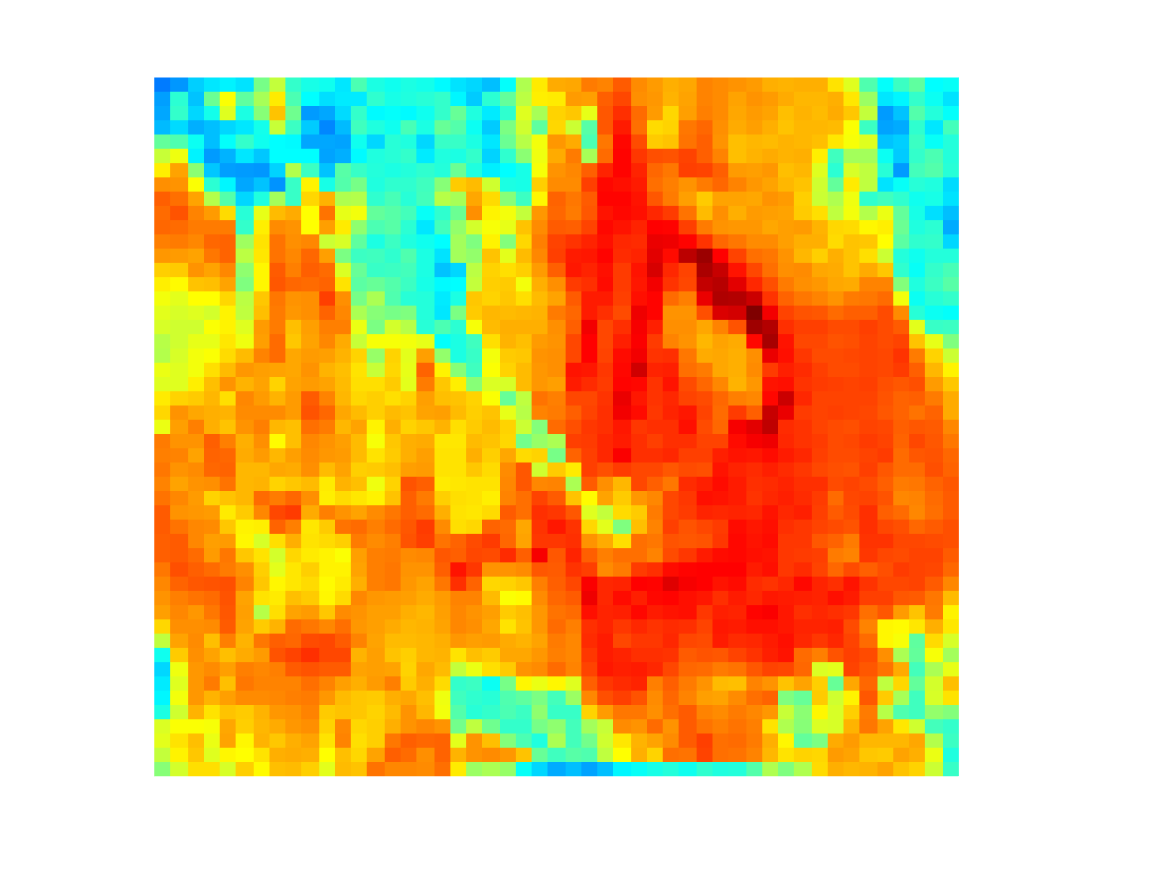}
        \put(35,70) {hr 3}
        \end{overpic}
    \end{minipage}\hspace{-0.5mm}
    \begin{minipage}{.195\textwidth}
        \begin{overpic}[width=\linewidth]{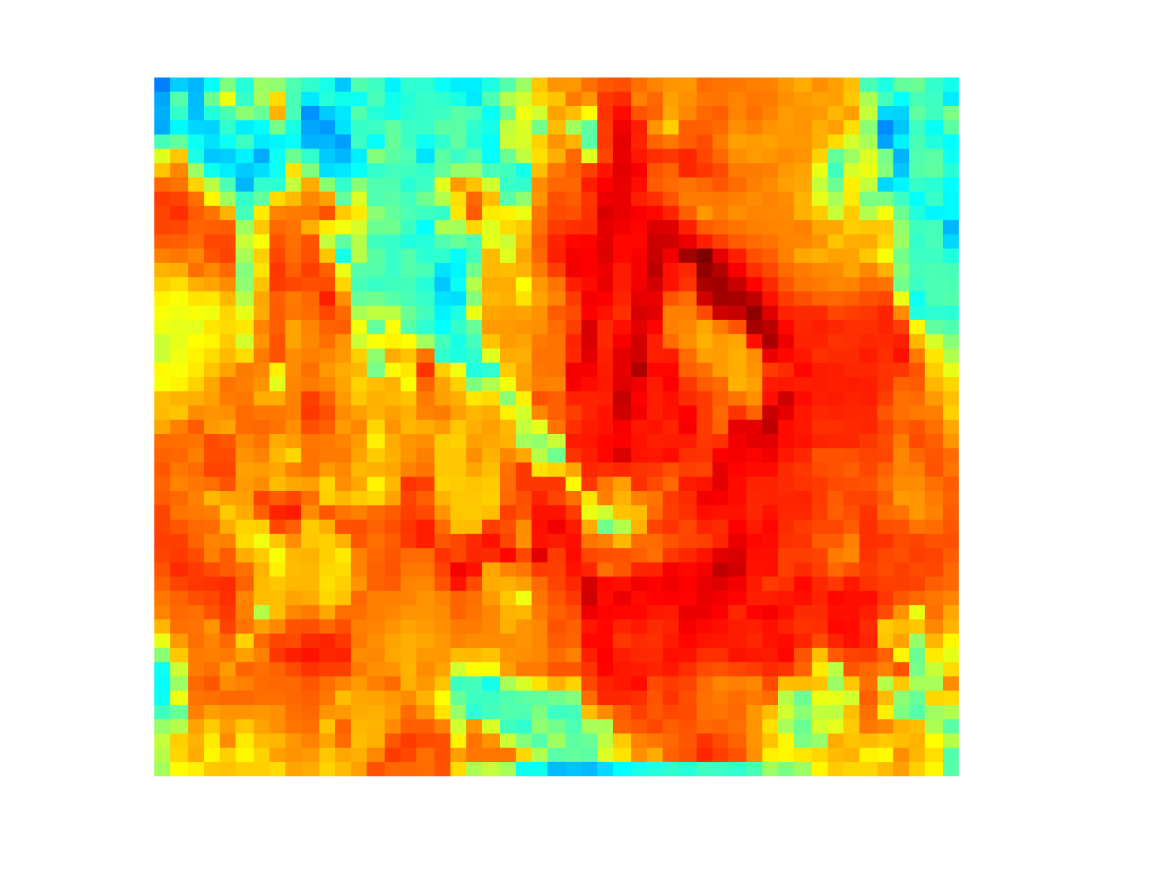}
        \put(35,70) {hr 4}
        \end{overpic}
    \end{minipage}\hspace{-0.5mm}
    \begin{minipage}{.195\textwidth}
        \begin{overpic}[width=\linewidth]{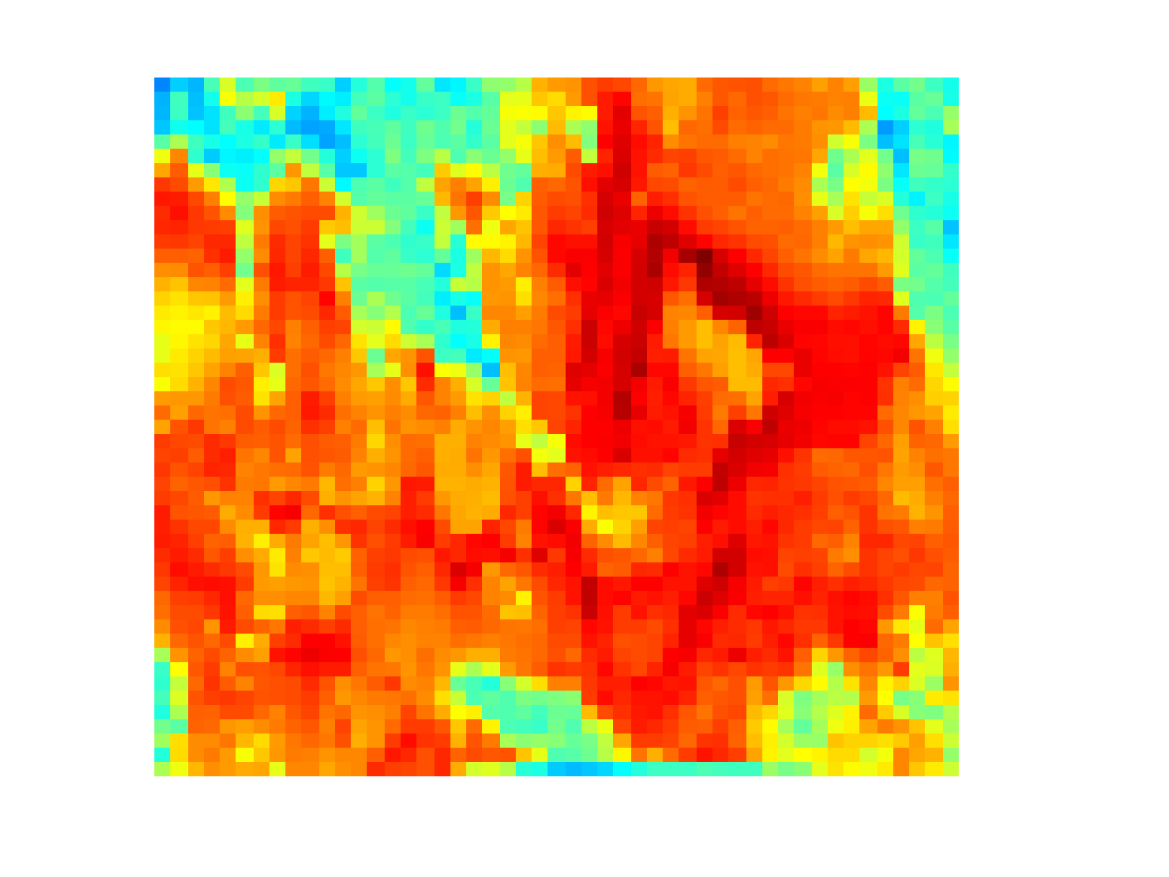}
        \put(35,70) {hr 5}
        \end{overpic}
    \end{minipage}
    
    \vspace{-2mm}

    \begin{minipage}{.195\textwidth}
        \begin{overpic}[width=\linewidth]{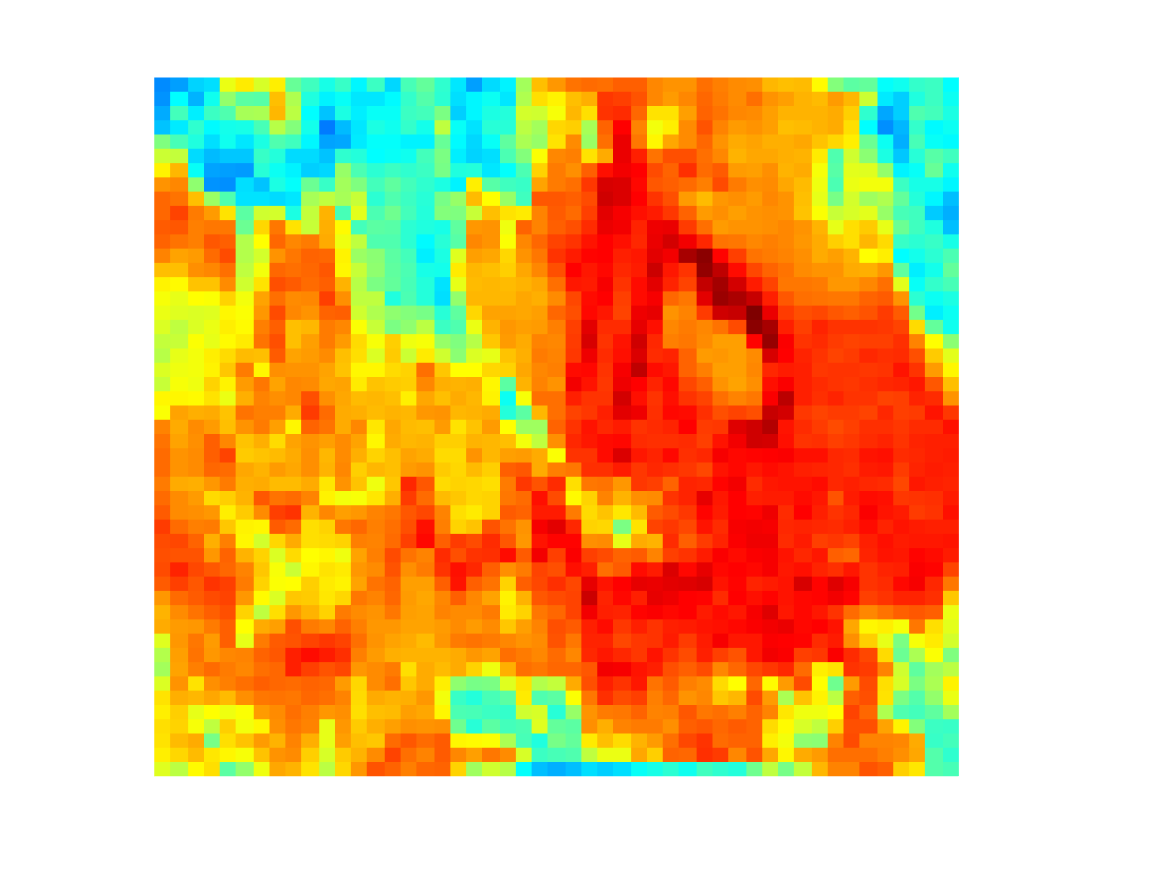}
        \put(0,23) {\rotatebox{90}{DKO}}
        \end{overpic}
    \end{minipage}\hspace{-0.5mm}
    \begin{minipage}{.195\textwidth}
        \includegraphics[width=\linewidth]{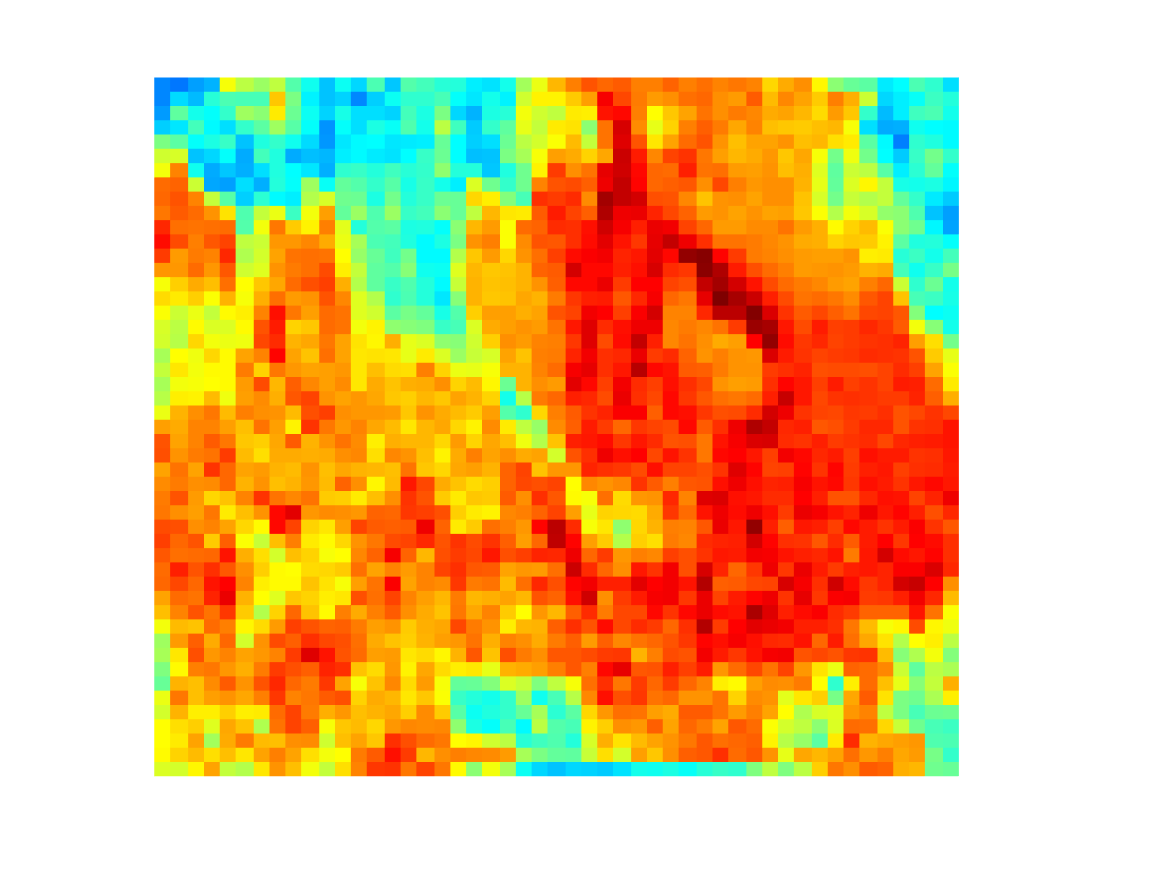}
    \end{minipage}\hspace{-0.5mm}
    \begin{minipage}{.195\textwidth}
        \includegraphics[width=\linewidth]{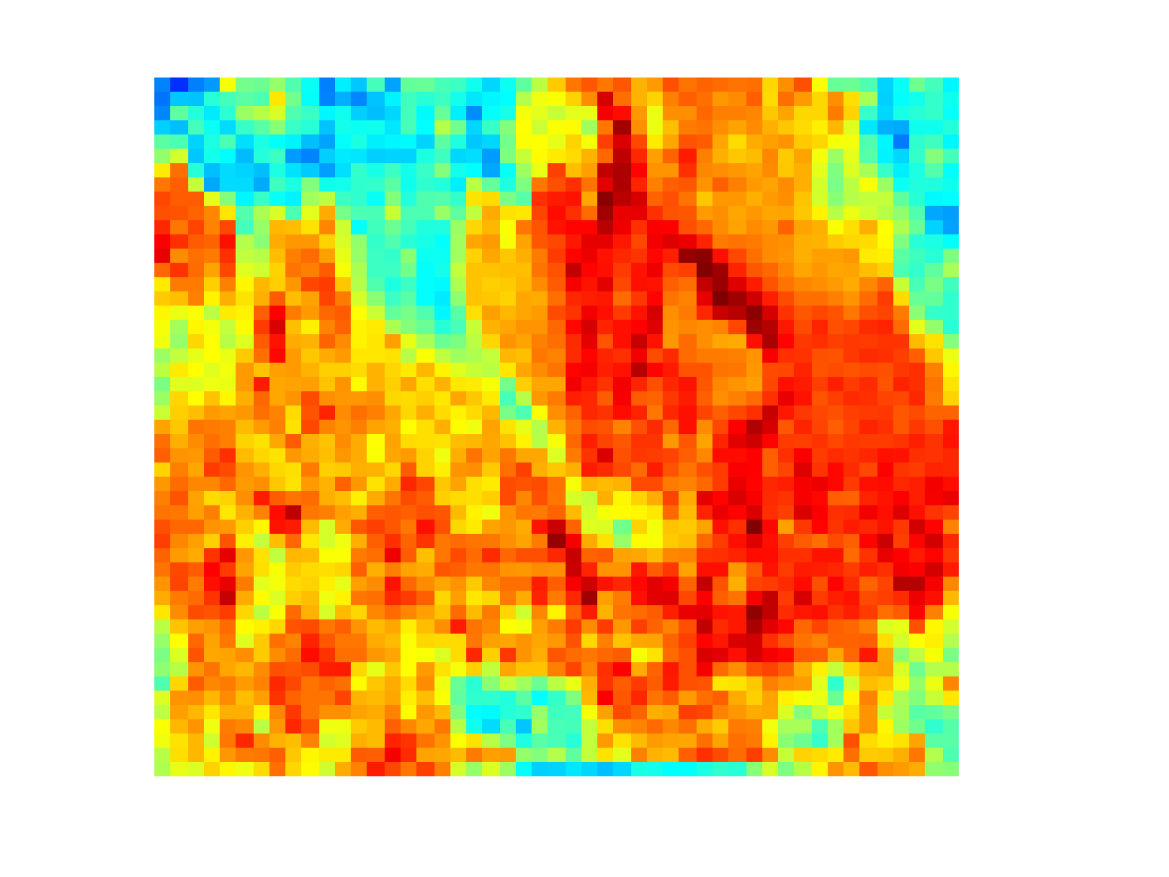}
    \end{minipage}\hspace{-0.5mm}
    \begin{minipage}{.195\textwidth}
        \includegraphics[width=\linewidth]{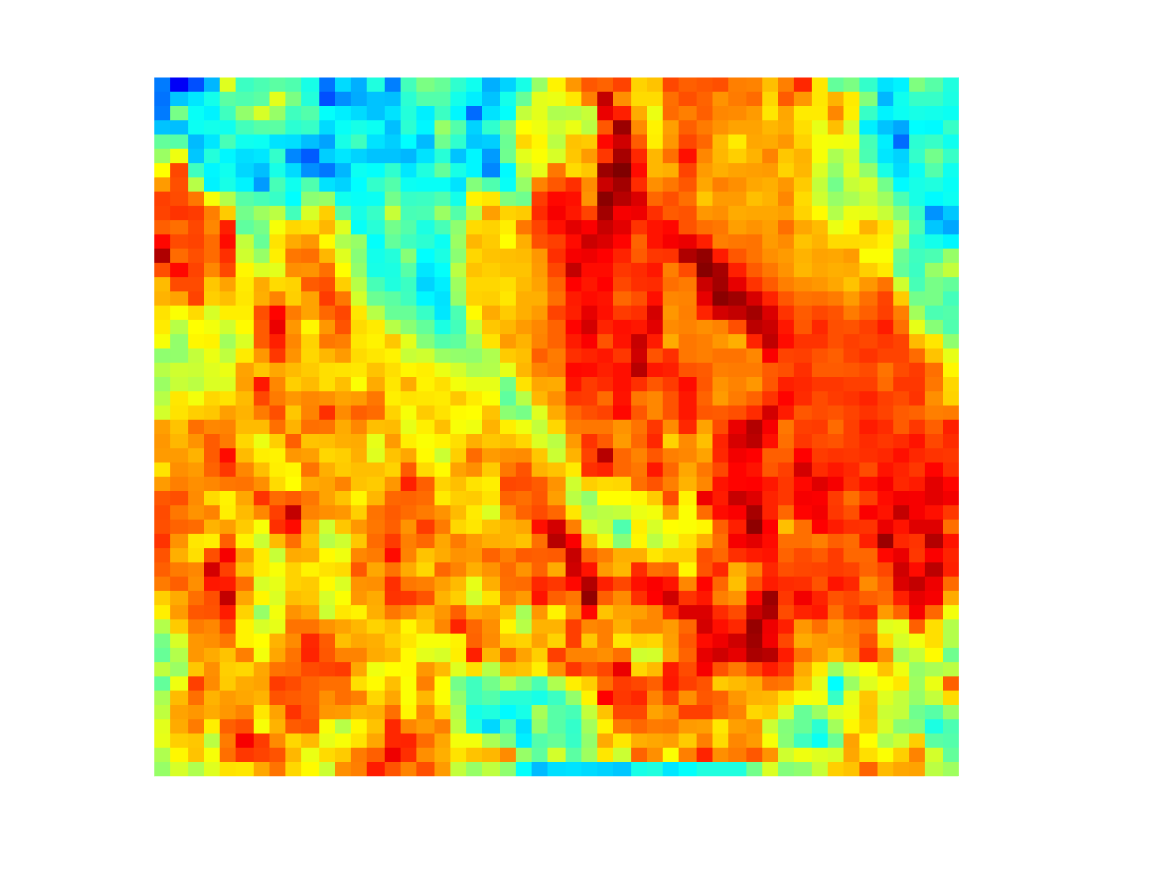}
    \end{minipage}\hspace{-0.5mm}
    \begin{minipage}{.195\textwidth}
        \includegraphics[width=\linewidth]{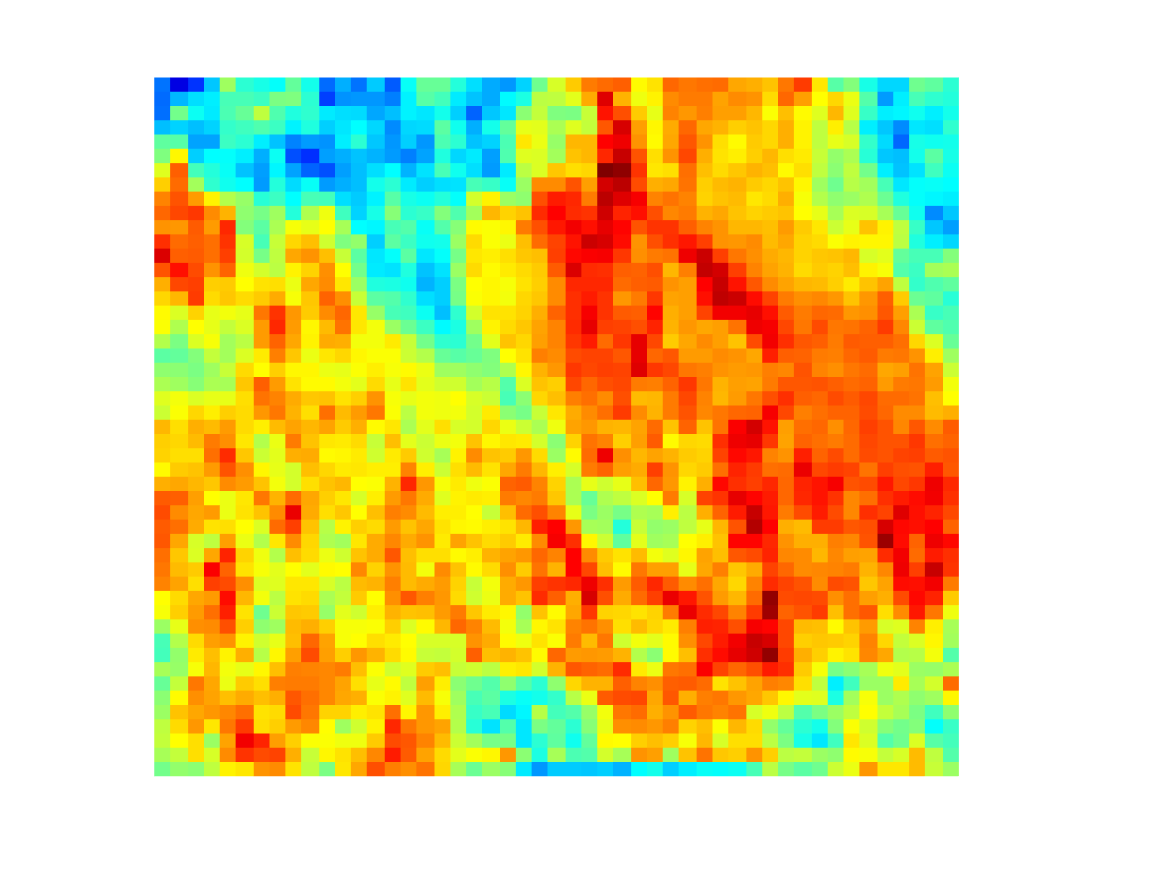}
    \end{minipage}
    \caption{Top row: Observables of the dataset for the first five hours after data stops. Bottom row: Predicted measurement observations using DKO. }
    \label{fig:dust_pred}
\end{figure}

\section{Future Directions}\label{sec:future}
This work lays the foundation for the study of DKOs and several important directions remain to be explored, which we briefly detail.\smallskip

\begin{description}[noitemsep,leftmargin=*]
\item[Learning observables for DKO:]
The performance of the finite-dimensional DKO approximation depends heavily on the choice of observables. While this work focuses on linear and quadratic observables that are analytically prescribed, one could imagine learning the observables in a data-driven way~\cite{li2017extended}. \smallskip

\item[Functional analysis over the probability space:]
To construct a Hilbert space structure for the DKO, we introduce the concept of a random measure serving an analogous role to the Lebesgue measure over finite-dimensional spaces, allowing us to extend the framework of functional analysis to the space of probability distributions and investigate the behavior of operators within this context. This framework has the potential to further expand measure-theoretic techniques in the study of data-driven dynamical systems and related fields.\smallskip

\item[Optimization over measure-valued dynamics:]
Problems in stochastic control, inverse problems, and reinforcement learning involve optimizing over dynamics of distributions rather than trajectories. The DKO provides a natural framework for analyzing such problems.\smallskip


\item[Statistical learning theory for DKO:]
From a theoretical point of view, understanding the sample complexity, generalization guarantees, and concentration inequalities for DKO approximations remains an interesting direction. While convergence in the infinite-data limit is established, rigorous bounds for finite-sample performance would be useful to characterize the reliability of DKO in practice.\smallskip

\item[Applications and real-world systems:]
Finally, applying DKO to real-world systems such as ensemble weather forecasting, flow of population-level dynamics, or epidemiological data could demonstrate the practical utility of the approach. Of particular interest are systems for which only partial, aggregate, or non-tracking measurements are available, where the DKO framework has a clear advantage over trajectory-based methods.
\end{description}

\section*{Acknowledgments} 
M.O.~was supported by the Office of Naval
Research under Grant Number N00014-23-1-2729 over the summer of 2024. M.O.~and Y.Y.~were supported by National Science Foundation through grant DMS-2409855, Office of Naval Research through
grant N00014-24-1-2088, and Cornell PCCW Affinito-Stewart Grant. A.T.~was supported by the Office of Naval
Research under Grant Number N00014-23-1-2729 and National Science Foundation CAREER (DMS-2045646). 


\appendix

\section{The Semigroup Property of the Transfer Operator}\label{app:proof1}
As expected,  the transfer operator for an RDS (see~\cref{def:transfer}) satisfies the semigroup property. This also implies that the DKO has a semigroup structure. 
\begin{lemma}\label{lemma:Pt_semi_group}
The transfer operator $T_t$ satisfies the semigroup property:
\[
T_{t+s} = T_t\circ T_s\qquad t,s\geq 0.
\]
\end{lemma}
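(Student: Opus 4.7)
Since the transfer operator is characterized through its action against test functions in $C_c^\infty(M)$ (and by density against all bounded measurable functions, which extends the defining identity because $T_t\pi$ and $T_t\circ T_s\,\pi$ are both probability measures on $M$), it suffices to show that for every $\hat{h}\in C_c^\infty(M)$ and every $\pi\in\mathcal{P}(M)$,
\[
\int_M \hat{h}\,d(T_{t+s}\pi) \;=\; \int_M \hat{h}\,d\bigl(T_t(T_s\pi)\bigr).
\]
My plan is to expand the right-hand side using the definition of $T_t$ and then the definition of $T_s$ applied to the bounded measurable function $x\mapsto \hat{h}(\Phi_t(\omega,x))$, then compare the resulting triple integral to the defining expression of $T_{t+s}$ by invoking the cocycle identity $\Phi_{t+s}(\omega',\cdot)=\Phi_t(\theta_s\omega',\cdot)\circ\Phi_s(\omega',\cdot)$.

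Concretely, first I would unfold
\[
\int_M \hat{h}\,d\bigl(T_t(T_s\pi)\bigr)=\int_\Omega\!\int_M \hat{h}(\Phi_t(\omega,x))\,d(T_s\pi)(x)\,dp(\omega),
\]
and then, treating $x\mapsto \hat{h}(\Phi_t(\omega,x))$ as a bounded measurable function for each fixed $\omega$, apply the definition of $T_s$ and the pushforward identity to get
\[
\int_M \hat{h}\,d\bigl(T_t(T_s\pi)\bigr)=\int_\Omega\!\int_\Omega\!\int_M \hat{h}\bigl(\Phi_t(\omega,\Phi_s(\omega',x))\bigr)\,d\pi(x)\,dp(\omega')\,dp(\omega),
\]
where Fubini is justified because $\hat{h}$ is bounded and $p,\pi$ are probability measures.

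The heart of the argument (and the step I expect to be the main obstacle) is to collapse this double $\Omega$-integral back onto a single $\Omega$-integral of $\hat{h}\circ\Phi_{t+s}$. For this, I use the hypothesis that $\omega'$ and $\theta_s\omega'$ are independent together with the invariance of $p$ under $\theta_s$: the map $\omega'\mapsto(\theta_s\omega',\omega')$ pushes $p$ forward to the product measure $p\otimes p$ on $\Omega\times\Omega$. Applying this change of variables to the integrand $F(\omega,\omega')=\int_M \hat{h}(\Phi_t(\omega,\Phi_s(\omega',x)))\,d\pi(x)$ yields
\[
\int_\Omega\!\int_\Omega F(\omega,\omega')\,dp(\omega)\,dp(\omega')=\int_\Omega F(\theta_s\omega',\omega')\,dp(\omega').
\]
Substituting and using the cocycle relation gives $F(\theta_s\omega',\omega')=\int_M \hat{h}(\Phi_{t+s}(\omega',x))\,d\pi(x)$, which is exactly the integrand appearing in the definition of $T_{t+s}\pi$. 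Comparing with \cref{def:transfer} completes the proof. The care needed for rigor lies in (i) justifying the use of the $T_s$-identity against the bounded measurable integrand $\hat{h}\circ\Phi_t(\omega,\cdot)$ rather than a smooth test function, and (ii) identifying the joint law of $(\theta_s\omega',\omega')$ with $p\otimes p$; both follow from the standing hypotheses of \cref{sec:random_dynamical_system} but deserve to be flagged explicitly.
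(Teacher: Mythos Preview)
Your proposal is correct and follows essentially the same argument as the paper: both use the cocycle identity, the independence of $\omega$ and $\theta_s\omega$ (equivalently, that $\omega\mapsto(\theta_s\omega,\omega)$ pushes $p$ to $p\otimes p$), and Fubini to match the two sides against test functions. The only cosmetic difference is direction---the paper starts from $T_{t+s}\pi$ and expands outward to reach $T_t(T_s\pi)$, whereas you start from $T_t(T_s\pi)$ and collapse back; the two technical caveats you flag (extending the defining identity to bounded measurable integrands, and identifying the joint law) are exactly the steps the paper uses implicitly.
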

\begin{proof}
We start by writing the definition of the probability distribution $T_{t+s}(\pi)$ associated with the transfer operator $T_{t+s}$ given an arbitrary $\pi\in \mathcal{P}(M)$. For any proper test function $h$ defined on $M$, we have
\[
\int_M h(x) \, dT_{t+s}(\pi)(x) = \int_\Theta \int_M h(x) \, d\Bigl(\Phi_{t+s}(\omega,\cdot)\#\pi\Bigr)(x)\,dp(\omega).
\]
Next, by the cocycle property of the evolution mapping $\Phi$, we can decompose 
\[
\Phi_{t+s}(\omega,\cdot) = \Phi_t\bigl(\theta_s\omega,\cdot\bigr) \circ \Phi_s(\omega,\cdot).
\]
Using this semigroup property,  the previous equality becomes
\[
\int_M h(x) \, dT_{t+s}(\pi)(x) = \int_\Theta \int_M h(x) \, d\Bigl(\Phi_t(\theta_s\omega,\cdot)\circ \Phi_s(\omega,\cdot)\#\pi\Bigr)(x)\,dp(\omega).
\]
By the definition of the pushforward measure, the inner integral can be expressed as
\[
\int_M h\Bigl(\Phi_t(\theta_s\omega,x)\Bigr) \, d\Bigl(\Phi_s(\omega,\cdot)\#\pi\Bigr)(x).
\]
Thus, we obtain
\begin{equation}\label{eq:appendix1}
\int_M h(x) \, dT_{t+s}(\pi)(x) = \int_\Theta \int_M h\Bigl(\Phi_t(\theta_s\omega,x)\Bigr) \, d\Bigl(\Phi_s(\omega,\cdot)\#\pi\Bigr)(x)\,dp(\omega).
\end{equation}

Now, we change variables by letting $\tilde{\omega} = \theta_s\omega$. Since the random variables $\theta_s\omega$ and $\omega$ are independent with respect to the probability distribution $p$, we may rewrite the right-hand side as 
\[
\int_\Theta\int_\Theta \int_M h\Bigl(\Phi_t(\tilde\omega,x)\Bigr) \, d\Bigl(\Phi_s(\omega,\cdot)\#\pi\Bigr)(x)\,dp(\tilde{\omega})\,dp(\omega).
\]
We can change the order of integration by Fubini's theorem.
Defining
\[
\tilde{h}(x) =\int_\Theta h\Bigl(\Phi_t(\tilde{\omega},x)\Bigr) \,dp(\tilde{\omega}),
\]
we find that~\cref{eq:appendix1} becomes
\[
\int_M h(x)\, dT_{t+s}(\pi)(x) = \int_\Theta \int_M \tilde{h}(x)\, d\Bigl(\Phi_s(\omega,\cdot)\#\pi\Bigr)(x)\,dp(\omega).
\]
By the definition of the transfer operator $T_s$, we recognize that the inner integral of the right-hand side corresponds to
\[
\int_M \tilde{h}(x)\, d\Bigl(T_s(\pi)\Bigr)(x).
\]
Finally, by applying the definition of $T_t$ to this expression, we obtain
\[
\int_M h(x)\, dT_{t+s}(\pi)(x) = \int_M h(x)\, d\Bigl(T_t\bigl(T_s(\pi)\bigr)\Bigr)(x)\,.
\]
Since this holds for every test function $h$, we conclude that
\[
T_{t+s}(\pi) = T_t\bigl(T_s(\pi)\bigr),
\]
which verifies the semigroup property of the transfer operator for the RDS.
\end{proof}
\end{document}